\documentclass[12pt]{amsart}
\usepackage{amssymb}
\input amssym.def
\usepackage{amsmath,amsfonts,hyperref,xcolor}
\usepackage{amscd}
\usepackage[mathscr]{eucal}
\setlength{\voffset}{-1cm} \setlength{\hoffset}{-2cm}
\setlength{\textwidth}{6.5in} \setlength{\textheight}{9in}

\setlength{\abovedisplayshortskip}{3mm}
\setlength{\belowdisplayshortskip}{3mm}

\hypersetup{colorlinks=true,citecolor={purple},linkcolor={teal},urlcolor={violet}}

%%%%%%%%%%%%%Abbreviations%%%%%%%%%%%%%%%%%%%%%

\newcommand{\N}{{\mathbb N}}
\newcommand{\Z}{{\mathbb Z}}
\newcommand{\Q}{{\mathbb Q}}
\newcommand{\C}{{\mathbb C}}

\newcommand{\z}{\zeta}

\newtheorem{thm}{Theorem}

\newtheorem{cor}{Corollary}
\newtheorem{prop}{Proposition}
\newtheorem*{ques}{Question}
\newtheorem{rmk}{Remark}
\newtheorem{defn}{Definition}
\newtheorem{ex}{Example}
\newcommand{\thmref}[1]{Theorem~\ref{#1}}
\newcommand{\propref}[1]{Proposition~\ref{#1}}

\newcommand{\corref}[1]{Corollary~\ref{#1}}
\newcommand{\rmkref}[1]{Remark~\ref{#1}}

\parindent=0.5cm
\footskip=0.85cm

\begin{document}

\title{Multiple Dirichlet series associated to additive and Dirichlet
characters}

\author{Biswajyoti Saha}

\address{Biswajyoti Saha\\ \newline
Institute of Mathematical Sciences, C.I.T. Campus, Taramani, 
Chennai, 600 113, India.}
\email{biswajyoti@imsc.res.in}

\subjclass[2010]{11M32, 32Dxx}

\keywords{multiple Dirichlet series, additive characters,
Dirichlet characters, meromorphic continuation, translation formula}

\begin{abstract}
In this article we study analytic properties of the multiple
Dirichlet series associated to additive and Dirichlet characters.
For the multiple Dirichlet series associated to additive characters,
the meromorphic continuation is established via obtaining
translation formulas satisfied by these
multiple Dirichlet series. While it seems difficult to obtain such
a translation formula for the multiple Dirichlet series
associated to Dirichlet characters, we rely on their intrinsic
connection with the multiple Dirichlet series associated to additive
characters in order to investigate their analytic characteristics. We are
also able to determine the exact set of singularities of the
multiple Dirichlet series associated to additive characters.
\end{abstract}

\maketitle

\section{Introduction}

Perhaps the most classical and the most studied multiple Dirichlet series
are the so-called multiple zeta functions. For an integer $r \ge 1$,
consider the open subset $U_r$ of $\C^r$: 
$$
U_r := \{ (s_1, \ldots, s_r) \in \C^r ~:~ \Re(s_1 + \cdots + s_i) > i
~\text{ for all }~ 1 \le i  \le r \}.
$$
The multiple zeta function of depth $r$, denoted by
$\zeta_r(s_1,\ldots,s_r)$, is a function on $U_r$
defined by 
$$
\zeta_r(s_1,\ldots,s_r):=\sum_{n_1>\cdots>n_r>0} 
n_1^{-s_1} \cdots n_r^{-s_r}.
$$
The above series converges normally on any compact subset 
of $U_r$ (see \cite{MSV} for a proof) and hence defines an
analytic function on $U_r$. This is a multi-variable generalisation
of the classical Riemann zeta function. The origin of these functions
can be traced back to Euler, who studied the case $r=2$.

The analytic theory of the multiple zeta functions has been in
the focus of research in recent years. As a function of several variables,
the meromorphic continuation of the multiple zeta function of depth $r$
was first established by J. Zhao \cite{JZ} in 1999. He used the theory
of generalised functions.

\begin{thm}[Zhao]
The multiple zeta function of depth $r$ can be extended as a
meromorphic function to $\C^r$ with possible simple poles at
the hyperplanes given by the equations
$$
s_1=1 ; ~s_1+\cdots+s_i =n \ \text{ for all }
\ n\in \Z_{\le i} \ \text{ and } \ 2 \le i \le r.
$$
Here $\Z_{\le i}$ denotes set of all integers less than or equal to $i$.
\end{thm}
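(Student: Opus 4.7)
My plan is to proceed by induction on the depth $r$. The base case $r = 1$ is the classical meromorphic continuation of the Riemann zeta function, with its unique simple pole at $s = 1$, which matches the statement of the theorem (the only listed hyperplane for $r=1$ is $s_1 = 1$).

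For the inductive step, I would peel off the outermost summation and write
$$
\zeta_r(s_1, \ldots, s_r) = \sum_{n_2 > \cdots > n_r > 0} \zeta(s_1,\, n_2+1)\, n_2^{-s_2} \cdots n_r^{-s_r},
$$
where $\zeta(s, a) := \sum_{k \ge 0}(k+a)^{-s}$ is the Hurwitz zeta function. Inserting its Euler--Maclaurin expansion to order $K$,
$$
\zeta(s_1,\,n_2+1) = \frac{(n_2+1)^{1-s_1}}{s_1-1} + \tfrac{1}{2}(n_2+1)^{-s_1} + \sum_{k=1}^{K} \frac{B_{2k}}{(2k)!}\frac{\Gamma(s_1+2k-1)}{\Gamma(s_1)}(n_2+1)^{1-s_1-2k} + R_K(s_1;n_2),
$$
and then binomially expanding each $(n_2+1)^{\alpha}$ around $n_2$, one rewrites $\zeta_r$ as a finite linear combination --- with coefficients rational in $s_1$ --- of shifted lower-depth zetas $\zeta_{r-1}(s_1+s_2+a,\, s_3,\ldots,s_r)$ for integers $a \ge -1$, plus a global remainder whose iterated summation converges absolutely, hence holomorphically, on a tube domain strictly containing $U_r$.

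Invoking the inductive hypothesis for each $\zeta_{r-1}$ term supplies a meromorphic extension to this enlarged domain; letting $K \to \infty$ exhausts $\C^r$. The pole inventory is obtained by combining two sources: (i) the explicit factor $1/(s_1-1)$ contributes the hyperplane $s_1 = 1$; (ii) a pole of $\zeta_{r-1}$ at $u_1 + \cdots + u_{j-1} = n$ with $n \le j-1$, pulled back through $u_1 = s_1 + s_2 + a$ and $u_i = s_{i+1}$ for $i \ge 2$, becomes $s_1 + \cdots + s_j = n - a$. As $n \le j-1$ and $a \ge -1$ range independently, $n - a$ sweeps through every integer $\le j$, which is exactly the set $\Z_{\le j}$ appearing in Zhao's list.

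The main obstacle I foresee is the uniform analytic control of the Euler--Maclaurin remainder: the pointwise estimate $R_K(s_1;n) = O(n^{-\Re(s_1)-2K-1})$ must be made locally uniform in $s_1$, so that after summation against $n_2^{-s_2}\cdots n_r^{-s_r}$ the result is a genuine holomorphic function on a domain strictly larger than $U_r$ --- a domain that must grow without bound as $K \to \infty$. Once this estimate is secured, the rest is bookkeeping: the $\Gamma$-quotient is a polynomial in $s_1$, the binomial coefficients are polynomials in $s_1$, and no spurious singularities are introduced, so the pole inventory and the meromorphic continuation both fall out of the recursion.
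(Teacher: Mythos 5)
Your argument is correct in outline, but it is not the route this paper takes: it is essentially the Euler--Maclaurin proof of Akiyama, Egami and Tanigawa \cite{AET}, whereas the paper quotes Zhao's theorem as a known result and, for all of the continuations it actually carries out, uses the translation-formula method of \cite{MSV}. In your approach the depth-$r$ function is expressed \emph{in terms of} depth-$(r-1)$ functions plus a remainder holomorphic on an enlarged tube; the price is exactly the obstacle you flag, namely locally uniform control of the Euler--Maclaurin remainder (and, as you have set things up, a second truncation as well, since the binomial series for $(n_2+1)^{\alpha}$ is infinite --- expanding $\zeta(s_1,n_2+1)$ directly in powers of $n_2$, as in \cite{AET}, avoids this double bookkeeping). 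The paper's preferred method runs in the opposite direction: from the elementary identity $(n-1)^{1-s}-n^{1-s}=\sum_{k\ge 0}(s-1)_k\,n^{-s-k}$ one obtains the translation formula
$$
\zeta_{r-1}(s_1+s_2-1,s_3,\ldots,s_r)=\sum_{k\ge 0}(s_1-1)_k\,\zeta_r(s_1+k,s_2,\ldots,s_r),
$$
which expresses the depth-$(r-1)$ function as a normally convergent series of translates of the depth-$r$ one; the continuation is then extracted by inverting finitely many initial terms (the matrix formulation with ${\bf A_1}(t)$ and ${\bf B_1}(t)$), and the only analytic input is the convergence of $\sum_{k}(S)_k 2^{-k}$ --- no asymptotic remainder estimates at all. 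Both methods produce the same list of possible polar hyperplanes, and your pole bookkeeping (the explicit factor $1/(s_1-1)$ giving $s_1=1$, and $n-a$ sweeping out $\Z_{\le j}$) is accurate; note that the refinement to the \emph{exact} pole set in Theorem~\ref{AET} comes, in either method, from the vanishing of the odd Bernoulli numbers, which enter your expansion as the coefficients $B_{2k}$ and the paper's as the entries of ${\bf B_1}(t)$.
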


In about the same time, S. Akiyama, S. Egami and Y. Tanigawa \cite{AET}
gave a simpler proof of the above fact using the classical Euler-Maclaurin
summation formula. They could also identify the exact set of
polar hyperplanes. The vanishing of the odd Bernoulli numbers
plays a central role in this context.

\begin{thm}[Akiyama-Egami-Tanigawa]\label{AET}
The multiple zeta function of depth $r$ is holomorphic in the open set
obtained by removing the following hyperplanes from $\C^r$
and it has simple poles at the hyperplanes given by the equations
\begin{equation*}
\begin{split}
& s_1=1 ; ~  s_1+s_2=2,1,0,-2,-4,-6, \ldots;\\
& s_1+\cdots+s_i = n \ \text{ for all } \ n \in \Z_{\le i} \ \text{ and } \ 3 \le i \le r.
\end{split}
\end{equation*}
\end{thm}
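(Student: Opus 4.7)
The plan is to proceed by induction on the depth $r$ and to apply the classical Euler-Maclaurin summation formula at each step, converting the sum over $n_1$ (with $n_1 > n_2$) into a translation formula that expresses $\zeta_r(s_1,\ldots,s_r)$ in terms of finitely many $\zeta_{r-1}$-values at shifted arguments plus a remainder term holomorphic on a steadily larger region. The base case $r=1$ is just the Riemann zeta function, which has its unique pole (a simple one) at $s=1$, matching the statement.

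For the inductive step, applied to $f(x) = x^{-s_1}$ with endpoint at $n_2$, Euler-Maclaurin yields a schematic identity of the form
\begin{equation*}
\zeta_r(s_1,\ldots,s_r) \;=\; \sum_{m \in \{0,\,1\} \cup 2\Z_{\ge 1}} c_m(s_1)\, \zeta_{r-1}(s_1+s_2+m-1,\, s_3,\ldots, s_r) \;+\; E_K(s_1,\ldots,s_r),
\end{equation*}
where the coefficients $c_m(s_1)$ are meromorphic in $s_1$ (with $c_0(s_1) = 1/(s_1-1)$, $c_1(s_1) = -1/2$, and $c_{2k}$ proportional to $B_{2k}$ for $k \ge 1$), and the remainder $E_K$ is holomorphic on a half-space that grows with the truncation parameter $K$. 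The decisive point is that only the even-indexed Bernoulli numbers (beyond $B_0, B_1$) contribute, since $B_{2k+1}=0$ for $k \ge 1$; this is precisely why the odd values $m = 3,5,7,\ldots$ are absent from the sum.

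Next I would trace the inherited singularities through the translation formula. The leading term $c_0(s_1) = 1/(s_1-1)$ contributes the hyperplane $s_1 = 1$. Applying the induction hypothesis to $\zeta_{r-1}$, its polar hyperplane $t_1 = 1$ pulls back under $t_1 = s_1+s_2+m-1$ to $s_1 + s_2 = 2 - m$, and as $m$ ranges over $\{0,1\} \cup 2\Z_{\ge 1}$ this yields exactly $\{2,1,0,-2,-4,\ldots\}$. For $3 \le i \le r$, the inductively known hyperplanes $t_1 + \cdots + t_{i-1} = n$ with $n \in \Z_{\le i-1}$ pull back to $s_1+\cdots+s_i = n - m + 1$; a short check shows that the union over allowed $n$ and $m$ is precisely $\Z_{\le i}$, so no thinning occurs at depth three or higher. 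This recovers the entire polar locus listed in the theorem.

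The last and most delicate point is to confirm that each listed hyperplane is a genuine simple pole, not a removable one. I would compute the residues along each hyperplane using the translation formula and propagate the nonvanishing from depth $r-1$ to depth $r$, carefully tracking the Bernoulli coefficients. I expect the main obstacle to be ruling out accidental cancellations among the several terms of the translation formula that feed the same residue; these need to be excluded by combining the induction hypothesis on nonvanishing at depth $r-1$ with the explicit form of $c_m(s_1)$. In particular, verifying that the apparent poles at $s_1+s_2 = -(2k+1)$ with $k \ge 0$ genuinely cancel is built into the construction via $B_{2k+1}=0$ and constitutes the structural heart of the theorem.
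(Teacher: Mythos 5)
Your overall strategy --- induction on the depth together with an Euler--Maclaurin/translation formula expressing $\zeta_r$ as a sum of shifted $\zeta_{r-1}$'s with coefficients $\frac{1}{s_1-1}$, $-\frac{1}{2}$ and multiples of $B_{2k}$, plus a remainder holomorphic on a half-space that grows with the truncation --- is exactly the route the paper attributes to Akiyama--Egami--Tanigawa and to \cite{MSV}; in the paper's notation your schematic identity is precisely \eqref{explicit-malf-1} specialised to trivial characters, with $c_{k+1}(s_1)=\frac{s_1\cdots(s_1+k-1)}{(k+1)!}\,B_{k+1}$ and with your $E_K$ playing the role of $\xi_q$, holomorphic on $U_r(q)$. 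Your treatment of the ``possible poles'' half is complete and correct: only one term of the formula carries the pole $t_1=1$ onto each hyperplane $s_1+s_2=2-m$, the vanishing of $B_{2k+1}$ for $k\ge 1$ removes the odd shifts, and the union of pullbacks fills out $\Z_{\le i}$ for $i\ge 3$.

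The gap is in the second half, which you only announce as a plan. For the depth-two hyperplanes a single term contributes, so nonvanishing of the residue is immediate; but for $s_1+\cdots+s_i=n$ with $i\ge 3$ \emph{several} terms contribute simultaneously, and after iterating the translation formula the residue becomes the $(0,i-n)$ entry of a product of $i-1$ infinite upper-triangular matrices ${\bf B_1}(\cdot)$ evaluated at distinct partial sums (this is the content of \thmref{residues-malf}, specialised to trivial characters). Ruling out the ``accidental cancellations'' you mention is the structural heart of the theorem, and it needs two specific inputs you have not supplied: (i) $B_n=0$ exactly when $n$ is odd and $n\ge 3$, so consecutive Bernoulli numbers never vanish together and hence at least one of the first two entries of every column of ${\bf B_1}(t)$ is nonzero; and (ii) the nonzero entries of the first row of ${\bf B_1}(t_1)$ are linearly independent over $\C$ as elements of $\C(t_1)$, where $t_1$ is independent of the variables appearing in the later factors. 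Together these force every entry of the first row of the product to be a nonzero rational function, hence a nonvanishing residue once multiplied by the (nonzero) restriction of the lower-depth zeta function. Finally, a small misstatement: the absence of poles along $s_1+s_2=-(2k+1)$ is not a cancellation to be ``verified''; those hyperplanes never enter the formula at all because the corresponding coefficient vanishes identically. The nontrivial verification runs in the opposite direction --- showing that the surviving contributions do \emph{not} cancel.
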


Thereafter, the problem of the meromorphic continuation 
of the multiple zeta functions drew attention of several
mathematicians. In this process,
a variety of methods evolved to address this problem. Perhaps the
simplest approach to this problem has been exhibited in \cite{MSV},
where the meromorphic continuation of the multiple zeta functions is
obtained by means of a simple translation formula and induction
on the depth $r$.

\begin{thm}[Mehta-Saha-Viswanadham] \label{anacont}
For each integer $r \ge 2$, the multiple zeta function of depth $r$
extends to a meromorphic function on $\mathbb C^r$ satisfying
the translation formula
\begin{equation} \label{transform}
\zeta_{r-1}(s_1+s_2-1,s_3,\ldots,s_r)=\sum_{k\ge 0}
(s_1-1)_k \ \zeta_r(s_1+k,s_2,\ldots,s_r),
\end{equation}
where the series of meromorphic functions on the right hand side
converges normally on all compact subsets of $\C^r$ and
for any $k \ge 0$ and $s\in \C$,  
$$
(s)_k:=\frac{s\cdots(s+k)}{(k+1)!}.
$$ 
\end{thm}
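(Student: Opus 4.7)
My overall strategy is to verify the translation formula \eqref{transform} as an identity of absolutely convergent series in a region where both sides are given by their defining expressions, and then to obtain the meromorphic continuation of $\zeta_r$ by induction on the depth $r$, using \eqref{transform} itself as the extension mechanism.

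For the series identity, I would work in the subregion of $U_r$ where $\Re(s_1)$ is large enough that the double sum on the right converges absolutely; swapping the order of summation gives
$$
\sum_{k \geq 0}(s_1-1)_k\,\zeta_r(s_1+k, s_2, \ldots, s_r) = \sum_{n_1 > \cdots > n_r > 0}\frac{1}{n_1^{s_1} n_2^{s_2} \cdots n_r^{s_r}}\sum_{k \geq 0}\frac{(s_1-1)_k}{n_1^k}.
$$
A direct binomial-series computation identifies $\sum_{k \geq 0}(s_1-1)_k x^k$ with $\bigl((1-x)^{1-s_1}-1\bigr)/x$, so the inner sum at $x=1/n_1$ simplifies to $n_1^{s_1}\bigl[(n_1-1)^{1-s_1}-n_1^{1-s_1}\bigr]$. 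Substituting back, splitting into two series, and reindexing $n_1 \mapsto n_1+1$ in the first causes cancellation except on the diagonal $n_1=n_2$, whose contribution collapses to $\zeta_{r-1}(s_1+s_2-1,s_3,\ldots,s_r)$.

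For the meromorphic continuation I would argue by induction on $r$, the case $r=1$ being the classical continuation of the Riemann zeta function. Assuming $\zeta_{r-1}$ is meromorphic on $\mathbb{C}^{r-1}$, the term $\zeta_{r-1}(s_1+s_2-1,s_3,\ldots,s_r)$ is meromorphic on $\mathbb{C}^r$, and rewriting \eqref{transform} as
$$
\zeta_r(s_1,s_2,\ldots,s_r) = \zeta_{r-1}(s_1+s_2-1,s_3,\ldots,s_r) - \sum_{k \geq 1}(s_1-1)_k\,\zeta_r(s_1+k,s_2,\ldots,s_r)
$$
bootstraps the domain of definition of $\zeta_r$ leftward in $s_1$: every summand with $k \geq N$ already lies in the convergence region of $\zeta_r$ once $\Re(s_1)>1-N$. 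Once the tail series is shown to converge normally on compact subsets of $\mathbb{C}^r$, $\zeta_r$ inherits meromorphy on $\mathbb{C}^r$ from the right-hand side.

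The principal obstacle is therefore the claim of normal convergence of the tail series on an arbitrary compact subset of $\mathbb{C}^r$. On such a set, $(s_1-1)_k = \Gamma(s_1+k)/[\Gamma(s_1-1)(k+1)!]$ grows only polynomially in $k$ by Stirling, while in the absolute convergence region each summand $\zeta_r(s_1+k,s_2,\ldots,s_r)$ decays at least as fast as $r^{-\Re(s_1)-k}$, since the dominant factor $n_1^{-(s_1+k)}$ has $n_1 \geq r$. The delicate point is that a compact set may intersect the polar hyperplanes of finitely many of the meromorphic functions $\zeta_r(s_1+k,\ldots)$ produced by the preceding bootstrapping steps; these offending terms can be cleared by multiplying through by a suitable polynomial in $s_1$ and estimating the resulting holomorphic tail with the bounds above. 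With normal convergence established, \eqref{transform} extends by analytic continuation to an identity of meromorphic functions on $\mathbb{C}^r$.
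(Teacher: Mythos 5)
Your proposal is correct and follows essentially the same route as the source: the binomial identity $(n-1)^{1-s}-n^{1-s}=\sum_{k\ge 0}(s-1)_k\,n^{-s-k}$, telescoping over $n_1$ to produce $\zeta_{r-1}(s_1+s_2-1,s_3,\ldots,s_r)$, and then induction on depth combined with a leftward bootstrap in $s_1$ over the translated regions $\Re(s_1+\cdots+s_i)>i-m$, with the bound $|(s_1-1)_k|\,n_1^{-k}\le (S)_k\,2^{-k}$ (since $n_1\ge 2$) giving the normal convergence of the tail. No gaps.
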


Further in \cite{MSV}, the method of
\textit{matrix formulation of translation formulas} was introduced
to write down the residues along the possible polar hyperplanes
in a computable form and recover the above
mentioned theorem of Akiyama, Egami and Tanigawa.

Soon after the works of Zhao \cite{JZ}, Akiyama, Egami
and Tanigawa \cite{AET}, several
generalisations of the multiple zeta functions were introduced
and their analytic properties were discussed. One such
important generalisation is the multiple Dirichlet series
associated to Dirichlet characters. It was first introduced
by Akiyama and Ishikawa \cite{AI} in 2002. Multiple Dirichlet series
associated to Dirichlet characters are now known as multiple
Dirichlet $L$-functions as these are several variable generalisations 
of the classical Dirichlet $L$-functions.

\begin{defn}
Let $r \ge 1$ be an integer and $\chi_1,\ldots,\chi_r$ be Dirichlet characters
of arbitrary modulus. The multiple Dirichlet $L$-function of depth $r$
associated to the Dirichlet characters $\chi_1,\ldots,\chi_r$ is denoted by 
$L_r(s_1,\ldots,s_r;~\chi_1,\ldots,\chi_r)$ 
and defined by the
following normally convergent series in $U_r$:
$$
L_r(s_1,\ldots,s_r;~\chi_1,\ldots,\chi_r):=\sum_{n_1>\cdots>n_r>0} 
\frac{\chi_1(n_1) \cdots \chi_r(n_r)}{n_1^{s_1} \cdots n_r^{s_r}}.
$$
\end{defn}

Akiyama and Ishikawa introduced the notion of
multiple Dirichlet $L$-functions for characters with same modulus,
but their definition also makes sense for Dirichlet characters
of arbitrary modulus. The normal convergence of the above series follows
from the normal convergence of the multiple zeta function of depth $r$
as an immediate consequence and we record it here in the following proposition.
Throughout this article, whenever we consider a set of characters,
they are of arbitrary modulus unless otherwise stated.

\begin{prop}\label{normal-conv-mdlf}
Let $r \ge 1$ be an integer and $\chi_1,\ldots,\chi_r$ be Dirichlet characters.
Then the family of functions
$$
\left ( \frac{\chi_1(n_1) \cdots \chi_r(n_r)}{n_1^{s_1} \cdots n_r^{s_r}}
\right )_{n_1>\cdots>n_r>0}
$$
is normally summable on compact subsets of $U_r$.
\end{prop}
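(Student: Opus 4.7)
The plan is to dominate the general term by its absolute value and then reduce to the normal summability of the corresponding multiple zeta term on $U_r$. Since $|\chi_j(n)| \le 1$ for every Dirichlet character and every positive integer $n$, one has for all $s=(s_1,\ldots,s_r)\in\C^r$ the pointwise bound
$$
\left|\frac{\chi_1(n_1)\cdots\chi_r(n_r)}{n_1^{s_1}\cdots n_r^{s_r}}\right| \;\le\; \frac{1}{n_1^{\Re s_1}\cdots n_r^{\Re s_r}}.
$$
So it suffices to produce, for each compact $K\subset U_r$, a summable majorant of the right-hand side as $(n_1,\ldots,n_r)$ ranges over strictly decreasing positive integer tuples.

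Fix such a compact $K$. Because $U_r$ is cut out by the partial-sum inequalities $\Re(s_1+\cdots+s_i)>i$, continuity and compactness give $m_i := \min_{s\in K}\Re(s_1+\cdots+s_i) > i$ for $1\le i\le r$. It is tempting to take $\sigma_i := \min_{s\in K}\Re s_i$ and dominate each factor $n_i^{-\Re s_i}$ separately, but the resulting tuple $(\sigma_1,\ldots,\sigma_r)$ need not lie in $U_r$ since $U_r$ is defined by constraints on partial sums rather than on individual coordinates. The key technical step is therefore to rewrite the monomial via an Abel-type rearrangement: setting $n_{r+1} := 1$, one has the identity
$$
n_1^{s_1}\cdots n_r^{s_r} \;=\; \prod_{i=1}^{r}\left(\frac{n_i}{n_{i+1}}\right)^{s_1+\cdots+s_i},
$$
which replaces each exponent $s_i$ by the constrained partial sum $s_1+\cdots+s_i$.

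Now each ratio $n_i/n_{i+1}$ is a real number $\ge 1$ (using $n_1>\cdots>n_r\ge 1 = n_{r+1}$), so taking moduli and applying $\Re(s_1+\cdots+s_i)\ge m_i$ yields, uniformly for $s\in K$,
$$
\left|\frac{1}{n_1^{s_1}\cdots n_r^{s_r}}\right| \;\le\; \prod_{i=1}^{r}\left(\frac{n_i}{n_{i+1}}\right)^{-m_i} \;=\; \frac{1}{n_1^{\sigma_1}\cdots n_r^{\sigma_r}},
$$
where $\sigma_1 := m_1$ and $\sigma_i := m_i - m_{i-1}$ for $i\ge 2$, so that $\sigma_1+\cdots+\sigma_i = m_i > i$ and hence $(\sigma_1,\ldots,\sigma_r)\in U_r$. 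Summing over $n_1>\cdots>n_r>0$ gives the finite value $\zeta_r(\sigma_1,\ldots,\sigma_r)$ by the normal convergence of the multiple zeta function on $U_r$ recalled in the introduction. This supplies the required summable majorant on $K$ and completes the plan. The only genuine subtlety is the Abel rearrangement above, which is forced on us precisely because $U_r$ constrains partial sums rather than individual coordinates; once that identity is in hand, the remainder is a direct comparison with the multiple zeta series.
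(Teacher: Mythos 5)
Your proof is correct and follows the same route as the paper: bound $|\chi_1(n_1)\cdots\chi_r(n_r)|$ by $1$ and reduce to the multiple zeta function. The only difference is that the paper simply cites the normal convergence of $\zeta_r$ on compact subsets of $U_r$ (proved in \cite{MSV}), whereas you re-derive the needed uniform majorant via the Abel-type rearrangement $n_1^{s_1}\cdots n_r^{s_r}=\prod_i (n_i/n_{i+1})^{s_1+\cdots+s_i}$ --- which is precisely the standard argument behind that cited fact.
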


So it follows that
$(s_1,\ldots,s_r) \mapsto L_r(s_1,\ldots,s_r;~\chi_1,\ldots,\chi_r)$
is a holomorphic function on $U_r$. Akiyama and Ishikawa have
also discussed the question of meromorphic continuation of the
multiple Dirichlet $L$-functions. When $r=1$,
classically the meromorphic continuation
is achieved by writing the function in terms of the Hurwitz zeta
function. For $r >1$, they pursued this idea under the assumption that
all the characters are of same modulus. In this
process some variants of the multiple Hurwitz zeta functions came up
and they proved the following theorem.

\begin{thm}[Akiyama-Ishikawa]\label{AI2}
Let $\chi_1,\ldots,\chi_r$ be Dirichlet characters of same modulus.
Then the multiple Dirichlet $L$-function $L_r(s_1,\ldots,s_r;~\chi_1,\ldots,\chi_r)$
of depth $r$ can be extended as a
meromorphic function to $\C^r$ with possible simple poles at
the hyperplanes given by the equations
$$
s_1=1 ; ~s_1+\cdots+s_i =n \ \text{ for all }
\ n\in \Z_{\le i} \ \text{ and } \ 2 \le i \le r.
$$
\end{thm}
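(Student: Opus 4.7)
The plan is to reduce the theorem to the meromorphic continuation of certain multiple Hurwitz-type series obtained by grouping the summation according to residue classes modulo $q$, and then to extend those series by an adaptation of the translation-formula method of \thmref{anacont}. Since each $\chi_i$ has modulus $q$, the value $\chi_i(n_i)$ depends only on $n_i \bmod q$, so grouping the summation over residues yields
\begin{equation*}
L_r(s_1,\ldots,s_r;\chi_1,\ldots,\chi_r) = \sum_{a_1,\ldots,a_r=1}^{q} \chi_1(a_1)\cdots\chi_r(a_r)\, Z_r(s_1,\ldots,s_r;a_1,\ldots,a_r),
\end{equation*}
where
\begin{equation*}
Z_r(s_1,\ldots,s_r;a_1,\ldots,a_r) := \sum_{\substack{n_1 > \cdots > n_r > 0 \\ n_i \equiv a_i \pmod q}} n_1^{-s_1}\cdots n_r^{-s_r}.
\end{equation*}
Since the outer sum is finite, it suffices to meromorphically continue each $Z_r$ to $\C^r$ with simple poles confined to the stated hyperplanes.

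Next, substituting $n_i = q m_i + a_i$ with $m_i \ge 0$, I would stratify $Z_r$ by the pattern of equalities and strict inequalities among consecutive $m_i$'s. The inequality $n_i > n_{i+1}$ is equivalent to either $m_i > m_{i+1}$, or to $m_i = m_{i+1}$ together with $a_i > a_{i+1}$, so only patterns compatible with the residues contribute. On each stratum, after grouping summands whose $m$-indices coincide, $Z_r$ takes the shape
\begin{equation*}
q^{-(s_1+\cdots+s_r)}\sum_{m_1 > \cdots > m_p \ge 0}\prod_{k=1}^{p}\bigl(m_k + b_{k}\bigr)^{-T_{k}},
\end{equation*}
where each $T_k$ is a contiguous partial sum of the $s_i$'s and each $b_k$ is a rational shift in $(0,1]$ determined by the residues $a_i$.

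Finally, I would prove the meromorphic continuation of each such multiple Hurwitz-type series by induction on its depth $p$. The translation formula \eqref{transform} of \thmref{anacont} is proved in \cite{MSV} by the telescoping identity $m^{1-s} = \sum_{n \ge m}\bigl(n^{1-s}-(n+1)^{1-s}\bigr)$ followed by a binomial expansion, and this argument is insensitive to a uniform rational shift $b \in (0,1]$ of the summation variable. It therefore yields an analogous translation formula for the Hurwitz-type multiple series, and induction on depth provides the meromorphic continuation to $\C^r$; the polar locus predicted by the translation formula is precisely $s_1 = 1$ together with $s_1+\cdots+s_i = n$ for $n \in \Z_{\le i}$ and $2 \le i \le r$. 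The main obstacle will be the combinatorial bookkeeping in the stratification step: the ordering interacts with the residues in a cascading manner, and enumerating the strata while tracking the shifts $b_k$ and the groupings $T_k$ requires care. Once that bookkeeping is under control, the analytic step is essentially a mechanical adaptation of the argument in \cite{MSV}, with no new analytic input needed.
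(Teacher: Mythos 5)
Your overall strategy --- sorting the summation into residue classes mod $q$ to produce multiple Hurwitz-type series and then continuing those by a translation formula --- is essentially the original route of Akiyama and Ishikawa \cite{AI}; the paper states \thmref{AI2} without proof precisely because it is their result, obtained by introducing ``variants of the multiple Hurwitz zeta functions.'' The paper's own machinery recovers this conclusion (in the stronger form of \thmref{poles-mdlf}, for characters of arbitrary moduli) by a genuinely different reduction: instead of working with the residue-class series $\Phi_r$, it passes by finite Fourier inversion \eqref{Phi-Psi-multiple} to the additive-character series $\Psi_r$, whose translation formulas \eqref{tf-malf-3} and \eqref{tf-malf-4} involve no shift parameters at all. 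That detour is exactly what sidesteps the bookkeeping you identify as your main obstacle, and it is what allows the moduli to differ.

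Two points in your write-up are more than bookkeeping, however. First, the collapsed shape $\prod_{k=1}^{p}(m_k+b_k)^{-T_k}$ is not correct in general: when a block of consecutive indices shares one value of $m$ but carries distinct residues, the block contributes $\prod_i (m+a_i/q)^{-s_i}$ with several \emph{different} shifts attached to the same summation index, and these factors do not merge into a single power $(m+b)^{-T}$. Second, and more seriously, the assertion that the argument of \cite{MSV} ``is insensitive to a uniform rational shift'' conceals the real difficulty: the shift is not uniform across indices, and the telescoping step itself destroys the single-shift form. Summing $(n_1+x_1-1)^{1-s_1}-(n_1+x_1)^{1-s_1}$ over $n_1>n_2$ leaves the boundary term $(n_2+x_1)^{1-s_1}$, which then multiplies $(n_2+x_2)^{-s_2}$ with a different shift $x_2$; so the depth-$(p-1)$ object produced by the recursion is again a multi-shift series, not a member of the class you started the induction with. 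To close the induction you must either enlarge the class from the outset to allow several shifts per index, or re-expand $(n_2+x_1)^{1-s_1}$ as a binomial series in $(x_1-x_2)/(n_2+x_2)$ and control the resulting additional infinite sum. Both fixes are standard and the polar locus you predict is correct, but as written the induction does not close.
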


At present, obtaining a complete description of the exact set of singularities
of the multiple Dirichlet $L$-functions seems to be a difficult problem.
For $r=2$ and specific choices of the characters $\chi_1$ and $\chi_2$,
Akiyama and Ishikawa were able to determine the exact set of polar hyperplanes.
To address this difficult question, one can aim to obtain a translation formula
satisfied by the multiple Dirichlet $L$-functions. For $r=1$, such a
translation formula has been established in \cite{BS}.

\begin{thm}[Saha]\label{periodic}
Let $\chi$ be a Dirichlet character of modulus $q$.
Then for $\Re(s)>1$, the associated Dirichlet $L$-function
$L(s;\chi):= \sum_{n \ge 1} \frac{\chi(n)}{n^s}$ satisfies the
following translation formula:
\begin{equation}\label{translation-periodic}
\sum_{a=1}^q \frac{\chi(a)}{a^{(s-1)}}=
\sum_{k \ge 0} (s-1)_k q^{k+1} \left( L(s+k;\chi) -
\sum_{a=1}^q \frac{\chi(a)}{a^{(s+k)}} \right),
\end{equation}
where the infinite series on the right hand side converges normally on every
compact subset of $\Re(s)>1$.
\end{thm}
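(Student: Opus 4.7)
The plan is to unfold the factor $L(s+k;\chi)-\sum_{a=1}^{q}\chi(a)a^{-(s+k)}$ on the right-hand side via the periodicity of $\chi$, interchange the orders of summation, evaluate the inner $k$-series in closed form using a shifted binomial expansion, and observe that what remains telescopes.

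First I would use the unique representation $n=a+mq$ with $1\le a\le q$ and $m\ge 1$ for integers $n>q$; together with $\chi(a+mq)=\chi(a)$ this yields
\begin{equation*}
L(s+k;\chi)-\sum_{a=1}^{q}\frac{\chi(a)}{a^{s+k}}\;=\;\sum_{a=1}^{q}\chi(a)\sum_{m\ge 1}\frac{1}{(a+mq)^{s+k}}.
\end{equation*}
Substituting this into the right-hand side of \eqref{translation-periodic} and formally exchanging the summations, the right-hand side becomes
\begin{equation*}
\sum_{a=1}^{q}\chi(a)\sum_{m\ge 1}\frac{q}{(a+mq)^{s}}\sum_{k\ge 0}(s-1)_{k}\,x_{a,m}^{k},\qquad x_{a,m}:=\frac{q}{a+mq}\in(0,1).
\end{equation*}

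Next, writing out $(s-1)_{k}=(s-1)s\cdots(s+k-1)/(k+1)!$ and reindexing $k\mapsto k+1$ in the classical binomial identity $\sum_{j\ge 0}\alpha(\alpha+1)\cdots(\alpha+j-1)\,x^{j}/j!=(1-x)^{-\alpha}$ (valid for $|x|<1$), one obtains
\begin{equation*}
\sum_{k\ge 0}(s-1)_{k}\,x^{k}=\frac{(1-x)^{1-s}-1}{x}.
\end{equation*}
Substituting $x=x_{a,m}$ and using the factorisation $1-x_{a,m}=(a+(m-1)q)/(a+mq)$, the innermost sum collapses so that the $(a,m)$-summand simplifies to $(a+(m-1)q)^{1-s}-(a+mq)^{1-s}$. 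Since $\Re(s)>1$, the tail $(a+Mq)^{1-s}\to 0$ as $M\to\infty$, hence the telescoping sum over $m\ge 1$ equals $a^{1-s}$, and summing $\chi(a)a^{1-s}$ over $a=1,\ldots,q$ recovers the left-hand side of \eqref{translation-periodic}.

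The main obstacle is justifying the interchange of summations and the asserted normal convergence on compacta of $\{\Re(s)>1\}$. I would handle this by bounding $|(s-1)_{k}|$ polynomially in $k$ uniformly on any compact $K\subset\{\Re(s)>1\}$, combining this with the uniform geometric estimate $x_{a,m}\le q/(q+1)<1$ to control the $k$-sum, and finally dominating the remaining double sum over $(a,m)$ by the tail $\sum_{m\ge 1}(a+mq)^{-\sigma_{0}}$ for $\sigma_{0}:=\min_{s\in K}\Re(s)>1$, which converges uniformly on $K$.
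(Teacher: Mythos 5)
Your proposal is correct: the identity $\sum_{k\ge 0}(s-1)_k x^k = \bigl((1-x)^{1-s}-1\bigr)/x$ specialised to $x=q/(a+mq)$ turns each summand into the telescoping difference $(a+(m-1)q)^{1-s}-(a+mq)^{1-s}$, and your uniform estimates (polynomial growth of $(s-1)_k$ against the geometric factor $q/(q+1)<1$, together with the convergence of $\sum_{m}(a+mq)^{-\sigma_0}$) legitimately justify the interchange of summations and the asserted normal convergence. This is essentially the same translation-formula argument the paper employs for its own results (the expansion \eqref{trick-malf-1} followed by telescoping, with the $(S)_k/2^k$-type bound of Proposition~\ref{normal-conv-malf-1}); note the paper itself only quotes Theorem~\ref{periodic} from \cite{BS} and gives no proof of it here.
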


As mentioned earlier, the meromorphic continuation of $L(s;\chi)$
is generally derived by writing $L(s;\chi)$ in
terms of the Hurwitz zeta function and then using the meromorphic
continuation of the Hurwitz zeta function. But one can also derive
this using \thmref{periodic}.
However, obtaining such a translation formula for multiple Dirichlet
$L$-functions seems harder and so is the analogue of \thmref{periodic}
for the multiple Dirichlet $L$-functions.

Now one can consider additive characters, i.e.
group homomorphisms $f: \Z \to \C^*$, in place of Dirichlet characters.
It is interesting to note that Dirichlet characters
are linked to additive characters and so is the
Dirichlet $L$-functions to the Dirichlet series associated 
to additive characters. Thus studying multiple Dirichlet series associated
to additive characters are very much relevant to the study of
multiple Dirichlet L-functions. In the following section
we introduce the notion of multiple Dirichlet series associated to
additive characters, which we call as multiple additive $L$-functions
and explore their relation with multiple Dirichlet $L$-functions.

In this article, we obtain the meromorphic continuation of the
multiple additive $L$-functions (see \thmref{anacont-malf}),
by means of translation formulas satisfied by these
multiple additive $L$-functions. Following the method of
\textit{matrix formulation of translation formulas} of \cite{MSV},
we first obtain an expression for residues along possible
polar hyperplanes of these multiple additive $L$-functions
(see \thmref{residues-malf}). We then determine the exact set
of singularities of these multiple additive $L$-functions
(see \thmref{exactpoles-malf}). For this we require
an important theorem of Frobenius about the zeros of the
Eulerian polynomials (see \S 6.3). Further, using the relation
between the multiple additive $L$-functions and the multiple
Dirichlet $L$-functions, we extend the above theorem of Akiyama
and Ishikawa for characters of arbitrary modulus (see \thmref{poles-mdlf}).

\section{Multiple additive $L$-functions}

\subsection{Relation with multiple Dirichlet $L$-functions}

Let $a,b,N$ be natural numbers with $1 \le a,b \le N$ and $\chi$ denote a
Dirichlet character of modulus $N$. For $\Re(s) >1$, we consider the 
following Dirichlet series:
\begin{align*}
& L(s;\chi) := \sum_{ n \ge 1} \frac{\chi(n)}{n^s},\\
& \Phi(s;a) := \sum_{\substack{ n \ge 1 \\ n \equiv a \bmod N}} \frac{1}{n^s},\\
& \Psi(s;b) := \sum_{ n \ge 1} \frac{e^{2 \pi \iota b n/N}}{n^s}.
\end{align*}
Then we have the following relations among these Dirichlet series:
\begin{equation}\label{L-Phi}
L(s;\chi)= \sum_{1 \le a \le N} \chi(a) \Phi(s;a),
\end{equation}
\begin{equation}\label{Psi-Phi}
\Psi(s;b)= \sum_{1 \le a \le N} e^{2 \pi \iota a b /N} \Phi(s;a).
\end{equation}
Further using \eqref{Psi-Phi}, one can deduce that
\begin{equation}\label{Phi-Psi}
\Phi(s;a)= \frac{1}{N} \sum_{1 \le b \le N} e^{-2 \pi \iota a b /N} \Psi(s;b).
\end{equation}
Hence
\begin{equation}\label{L-Psi}
L(s;\chi)= \frac{1}{N} \sum_{1 \le a \le N} \chi(a) \sum_{1 \le b \le N}
e^{-2 \pi \iota a b /N} \Psi(s;b).
\end{equation}

In fact \eqref{L-Psi} can be generalised for multiple Dirichlet $L$-functions.
Let $r \ge 1$ be a natural number and for each $1 \le i \le r$, let
$a_i,b_i,N_i$ be natural numbers with $1 \le a_i,b_i \le~N_i$.
Also for each $i$, let $\chi_i$ be a Dirichlet character of modulus $N_i$. Then the
depth-$r$ multiple Dirichlet $L$-function associated to $\chi_1,\ldots,\chi_r$
can be written as follows:
\begin{equation}\label{L-Phi-multiple}
L_r(s_1,\ldots,s_r;~\chi_1,\ldots,\chi_r)
= \sum_{\substack{1 \le a_i \le N_i \\ \text{for all } 1 \le i \le r}}
\chi_1(a_1) \cdots \chi_r(a_r) \Phi_r(s_1,\ldots,s_r; a_1,\ldots,a_r),
\end{equation}
where $\Phi_r(s_1,\ldots,s_r; a_1,\ldots,a_r)$ is defined as the 
following multiple Dirichlet series in $U_r$:
$$
\Phi_r(s_1,\ldots,s_r; a_1,\ldots,a_r) := \sum_{\substack{n_1>\cdots>n_r>0
\\ n_i \equiv a_i \bmod N_i \\ \text{for all } 1 \le i \le r}}
n_1^{-s_1} \cdots n_r^{-s_r}.
$$
Next we consider the following multiple Dirichlet series in $U_r$:
$$
\Psi_r(s_1,\ldots,s_r; b_1,\ldots,b_r) := \sum_{n_1>\cdots>n_r>0}
\frac{e^{2 \pi \iota \left( \frac{b_1 n_1}{N_1} + \cdots +\frac{b_r n_r}{N_r} \right)}}
{n_1^{s_1} \cdots n_r^{s_r}}.
$$
Now we can write down a several variable generalisation of \eqref{Psi-Phi}:
\begin{equation}\label{Psi-Phi-multiple}
\Psi_r(s_1,\ldots,s_r; b_1,\ldots,b_r)
=\sum_{\substack{1 \le a_i \le N_i \\ \text{for all } 1 \le i \le r}}
e^{2 \pi \iota \left( \frac{a_1 b_1}{N_1} + \cdots +\frac{a_r b_r}{N_r} \right)}
\Phi_r(s_1,\ldots,s_r; a_1,\ldots,a_r).
\end{equation}
Further using \eqref{Psi-Phi-multiple}, we derive
\begin{equation}\label{Phi-Psi-multiple}
\begin{split}
&\Phi_r(s_1,\ldots,s_r; a_1,\ldots,a_r)\\
&= \frac{1}{N_1 \cdots N_r}
\sum_{\substack{1 \le b_i \le N_i \\ \text{for all } 1 \le i \le r}}
e^{-2 \pi \iota \left( \frac{a_1 b_1}{N_1} + \cdots +\frac{a_r b_r}{N_r} \right)}
\Psi_r(s_1,\ldots,s_r; b_1,\ldots,b_r).
\end{split}
\end{equation}
Therefore, using \eqref{L-Phi-multiple} and \eqref{Phi-Psi-multiple} we obtain that
\begin{equation}\label{L-Psi-multiple}
\begin{split}
&L_r(s_1,\ldots,s_r;~\chi_1,\ldots,\chi_r)\\
&= \frac{1}{N_1 \cdots N_r}
\sum_{\substack{1 \le a_i \le N_i \\ \text{for all } 1 \le i \le r}}
\chi_1(a_1) \cdots \chi_r(a_r)
\sum_{\substack{1 \le b_i \le N_i \\ \text{for all } 1 \le i \le r}}
e^{-2 \pi \iota \left( \frac{a_1 b_1}{N_1} + \cdots +\frac{a_r b_r}{N_r} \right)}
\Psi_r(s_1,\ldots,s_r; b_1,\ldots,b_r).
\end{split}
\end{equation}
The above formula \eqref{L-Psi-multiple} plays an important role in the
investigation of the analytic properties of the multiple Dirichlet $L$-functions.

\subsection{Multiple additive $L$-functions and their translation formulas}

We now formally introduce the notion of multiple additive $L$-functions.
Besides the above exhibited relation \eqref{L-Psi-multiple} with
the multiple Dirichlet $L$-functions, the multiple additive $L$-functions
are also of independent interest. Special values of these functions,
such as the so-called cyclotomic multiple zeta values, the coloured
multiple zeta values have also been studied in arithmetic context.
These functions can also be thought of as cousins of the multiple
polylogarithms.

\begin{defn}
For a natural number $r \ge 1$ and additive characters
$f_1, \ldots, f_r$, the multiple additive $L$-function associated to $f_1, \ldots, f_r$
is denoted by $L_r(f_1, \ldots, f_r;~s_1,\ldots,s_r)$ and defined by the
following series:
$$
L_r(f_1, \ldots, f_r;~s_1,\ldots,s_r):=\sum_{n_1>\cdots>n_r>0} 
\frac{f_1(n_1) \cdots f_r(n_r)}{n_1^{s_1} \cdots n_r^{s_r}}
= \sum_{n_1>\cdots>n_r>0} 
\frac{f_1(1)^{n_1} \cdots f_r(1)^{n_r}}{n_1^{s_1} \cdots n_r^{s_r}}.
$$
\end{defn}

A necessary and sufficient condition for the absolute convergence of the
above series in $U_r$ is given in terms of the partial products
$g_i:= \prod_{1 \le j \le i} f_j$ for all $1 \le i \le r$.
The condition is that
$$
|g_i(1)| \le 1 ~ \text{for all} ~ 1 \le i \le r.
$$
The sufficiency of this condition is easily established by noting that
for any arbitrary $r$ integers $n_1>\cdots>n_r>0$, one has
$$
\left| f_1(1)^{n_1} \cdots f_r(1)^{n_r} \right|
\le \left| g_1(1)^{n_1-n_2} \cdots g_{r-1}(1)^{n_{r-1}-n_r} g_r(1)^{n_r} \right|.
$$
It can also be shown that if $|g_i(1)| >1$
for some $i$, the above series does not converge absolutely in $U_r$,
in fact, anywhere in $\C^r$. To see this, let $i$ be one of the index such that
$|g_i(1)| >1$. Now if possible let for a complex $r$-tuple $(s_1, \ldots, s_r)$,
the series
$$
\sum_{n_1>\cdots>n_r>0} \frac{f_1(n_1) \cdots f_r(n_r)}
{n_1^{s_1} \cdots n_r^{s_r}}
$$
converges absolutely i.e. the series
$$
\sum_{n_1>\cdots>n_r>0} \frac{|f_1(n_1) \cdots f_r(n_r)|}
{n_1^{\sigma_1} \cdots n_r^{\sigma_r}}
$$
of non-negative real numbers converges, where $\sigma_i$ denotes the
real part of $s_i$. Then the following smaller series
$$
\sum_{n > r-i} \frac{|f_1(n+i-1) f_2(n+i-2) \cdots f_i(n)
f_{i+1}(r-i) \cdots f_r(1)|}
{(n+i-1)^{\sigma_1} (n+i-2)^{\sigma_2} \cdots
n^{\sigma_i} (r-i)^{\sigma_{i+1}} \cdots
1^{\sigma_r}}
$$
is convergent.
Note that the numerator of the summand in the above series
is nothing but
$$
|g_1(1) \cdots g_{i-1}(1) g_i(1)^n f_{i+1}(r-i) \cdots f_r(1)|
$$
and the denominator is smaller than
$$
(n+i-1)^{\sigma_1 + \cdots + \sigma_i} (r-i)^{\sigma_{i+1}} \cdots 1^{\sigma_r}.
$$
As the series
$$
\sum_{n > r-i} \frac{|g_i(1)|^n}{(n+i-1)^{\sigma_1 + \cdots + \sigma_i}}
$$
does not converge for any choice of complex $r$-tuple $(s_1, \ldots, s_r)$
when $|g_i(1)|>1$, we get a contradiction.

If we write $f_i(1)= e^{2 \pi \iota \lambda_i}$ for some $\lambda_i \in \C$,
the condition $|g_i(1)| \le 1$ for $1 \le i \le r$ can be rewritten as
$$
\Im(\lambda_1 + \cdots + \lambda_i) \ge 0 \ \text{ for } 1 \le i \le r.
$$
With the necessary and sufficient condition for absolute convergence
of the multiple additive $L$-functions in place, we derive the following result
as an immediate consequence of the normal convergence of the multiple
zeta function of depth $r$.

\begin{prop}\label{normal-conv-malf}
Let $r \ge 1$ be an integer and $f_1,\ldots,f_r$ be additive characters
such that the partial products $g_i:= \prod_{1 \le j \le i} f_j$ satisfy
the condition $|g_i(1)| \le 1$ for all  $1 \le i \le r$.
Then the family of functions
$$
\left ( \frac{f_1(n_1) \cdots f_r(n_r)}{n_1^{s_1} \cdots n_r^{s_r}}
\right )_{n_1>\cdots>n_r>0}
$$
is normally summable on compact subsets of $U_r$.
\end{prop}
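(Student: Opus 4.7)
The plan is to reduce the claim immediately to the known normal convergence of the ordinary multiple zeta function of depth $r$ on compact subsets of $U_r$, by exhibiting a uniform bound on the additive-character factors in the numerator.

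First I would establish the pointwise identity
\begin{equation*}
f_1(1)^{n_1}\cdots f_r(1)^{n_r} \;=\; g_r(1)^{n_r}\,\prod_{i=1}^{r-1} g_i(1)^{n_i-n_{i+1}},
\end{equation*}
which is just a rearrangement using $f_i(1)=g_i(1)/g_{i-1}(1)$ (with $g_0(1)=1$). Since $n_1>\cdots>n_r>0$ are integers, the exponents $n_i-n_{i+1}$ and $n_r$ are all strictly positive. Under the standing hypothesis $|g_i(1)|\le 1$ for every $1\le i\le r$, the right-hand side has absolute value at most $1$, so
\begin{equation*}
|f_1(n_1)\cdots f_r(n_r)| \;=\; |f_1(1)^{n_1}\cdots f_r(1)^{n_r}| \;\le\; 1
\qquad \text{for all } n_1>\cdots>n_r>0.
\end{equation*}

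Next, given a compact subset $K\subset U_r$, I would use the bound above to get
\begin{equation*}
\sup_{(s_1,\ldots,s_r)\in K}\left|\frac{f_1(n_1)\cdots f_r(n_r)}{n_1^{s_1}\cdots n_r^{s_r}}\right|
\;\le\; \sup_{(s_1,\ldots,s_r)\in K}\frac{1}{n_1^{\Re(s_1)}\cdots n_r^{\Re(s_r)}}.
\end{equation*}
Summing over the index set $\{n_1>\cdots>n_r>0\}$ and invoking the normal convergence of the depth-$r$ multiple zeta function on compact subsets of $U_r$ (recalled at the beginning of the introduction via \cite{MSV}), the right-hand series is finite. This is precisely the statement of normal summability of the original family on $K$, and since $K\subset U_r$ was arbitrary, the proposition follows.

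There is no real obstacle in this argument: the content has already been worked out in the paragraphs preceding the proposition, where the author displays the key majorisation and verifies both the sufficiency and the necessity of the condition $|g_i(1)|\le 1$. The proposition is essentially the formal statement of the sufficiency half, upgraded from absolute convergence at a point to uniform absolute summability on compacta, and the upgrade is automatic because the majorant is the already known normally summable family defining $\zeta_r$.
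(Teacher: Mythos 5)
Your argument is correct and matches the paper's: the author likewise bounds the numerator by $1$ via the telescoping identity in the partial products $g_i$ (displayed just before the proposition) and then invokes the normal convergence of $\zeta_r$ on compact subsets of $U_r$. No divergence from the paper's route.
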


For sake of completeness, we recall the definition of normal
convergence.

\begin{defn}
Let $X$ be a set and $(f_i)_{i \in I}$ be a 
family of complex valued functions
defined on $X$. We say that the family of 
functions $(f_i)_{i \in I}$
is normally summable on $X$ or the series $\sum_{i \in I} f_i$ 
converges normally on $X$ if 
$$
\|f_i\|_X := \sup_{x \in X} |f(x)| < \infty ,
~\text{      for all  }i \in I
$$ 
and the family of real numbers 
$(\| f_i \|_X)_{i \in I}$ is summable. 
\end{defn}

\begin{defn}
Let $X$ be an open subset of $\C^r$ and $(f_i)_{i \in I}$ be a
family of meromorphic functions on $X$. We say that
$(f_i)_{i \in I}$ is normally summable or $\sum_{ i \in I} f_i$
is normally convergent on all compact
subsets of $X$ if for any compact subset $K$ of $X$,
there exists a finite set $J \subset I$ such that
each $f_i$ for $i \in I \setminus J$ is holomorphic in an open 
neighbourhood of $K$ and the family 
$(f_i|K)_{ i \in {I \setminus J}}$ is normally summable
on $K$. In this case, $\sum_{ i \in I} f_i$ is a well
defined meromorphic function on~$X$.
\end{defn}

\propref{normal-conv-malf} implies that for a
natural number $r \ge 1$ and additive characters
$f_1, \ldots, f_r$ such that the partial products
$g_i:= \prod_{1 \le j \le i} f_j$ satisfy the condition
$|g_i(1)| \le 1$ for all  $1 \le i \le r$, the multiple additive $L$-function
$L_r(f_1, \ldots, f_r;~s_1,\ldots,s_r)$ defines an analytic function in $U_r$.
We now address the question of meromorphic continuation of such multiple
additive $L$-functions. This is done via obtaining translation formulas
analogous to \eqref{transform}. The multiple additive $L$-function
$L_r(f_1, \ldots, f_r;~s_1,\ldots,s_r)$
satisfies the following translation formulas depending
on the condition that whether $f_1(1)=1$ or not.

\begin{thm}\label{translation-malf-1}
For any integer $r \ge 2$ and additive characters $f_1, \ldots, f_r$ such that
the partial products $g_i:= \prod_{1 \le j \le i} f_j$ satisfy the condition
$|g_i(1)| \le 1$ for all  $1 \le i \le r$ with $f_1(1)=1$, the associated
multiple additive $L$-function $L_r(f_1, \ldots, f_r;~s_1,\ldots,s_r)$
satisfies the following translation formula in $U_r$:
\begin{equation}\label{tf-malf-1}
L_{r-1}(f_2, \ldots, f_r;~s_1+s_2-1,s_3,\ldots,s_r)
=\sum_{k\ge 0} (s_1-1)_k \
L_r(f_1, \ldots, f_r;~s_1+k,s_2,\ldots,s_r),
\end{equation} 
where the series on the right hand side converges normally
on any compact subset of $U_r$ .
\end{thm}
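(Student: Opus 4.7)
The plan is to derive the formula by starting from the right-hand side. Since $f_1(1)=1$ we have $f_1(n)=1$ for every $n\ge 1$, so substituting the defining Dirichlet series for each $L_r(f_1,\ldots,f_r;\,s_1+k,s_2,\ldots,s_r)$ and formally interchanging the summation order (to be justified at the end) produces
$$
\sum_{k\ge 0}(s_1-1)_k\,L_r(f_1,\ldots,f_r;\,s_1+k,s_2,\ldots,s_r)
=\sum_{n_1>\cdots>n_r>0}\frac{f_2(n_2)\cdots f_r(n_r)}{n_1^{s_1}\cdots n_r^{s_r}}
\sum_{k\ge 0}\frac{(s_1-1)_k}{n_1^k}.
$$
The whole argument then rests on evaluating the inner generating series and controlling convergence.

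For the generating series, rewriting $(s_1-1)_k=\frac{s_1-1}{k+1}\binom{s_1+k-1}{k}$ and integrating the standard binomial identity $\sum_{k\ge 0}\binom{s_1+k-1}{k}x^k=(1-x)^{-s_1}$ from $0$ to $x$ yields the closed form
$$
\sum_{k\ge 0}(s_1-1)_k\,x^k=\frac{(1-x)^{1-s_1}-1}{x}\qquad(|x|<1).
$$
I would then specialise to $x=1/n_1$, which satisfies $1/n_1\le 1/2$ because $n_1>n_2\ge 1$ forces $n_1\ge 2$. A short simplification produces the telescoping identity
$$
\frac{1}{n_1^{s_1}}\sum_{k\ge 0}\frac{(s_1-1)_k}{n_1^k}=\frac{1}{(n_1-1)^{s_1-1}}-\frac{1}{n_1^{s_1-1}}.
$$
Since $\Re(s_1)>1$ throughout $U_r$, the sum over $n_1>n_2$ telescopes to $1/n_2^{s_1-1}$, so the right-hand side collapses exactly to $\sum_{n_2>\cdots>n_r>0}\frac{f_2(n_2)\cdots f_r(n_r)}{n_2^{s_1+s_2-1}n_3^{s_3}\cdots n_r^{s_r}}$, which is precisely $L_{r-1}(f_2,\ldots,f_r;\,s_1+s_2-1,s_3,\ldots,s_r)$.

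The remaining step, which I expect to be the main technical obstacle, is to establish normal convergence of the series $\sum_k(s_1-1)_k\,L_r(f_1,\ldots,f_r;\,s_1+k,s_2,\ldots,s_r)$ on every compact $K\subset U_r$; this simultaneously legitimises the Fubini interchange above. Writing $\sigma_i=\Re(s_i)$, the hypothesis $|g_i(1)|\le 1$ yields the uniform bound $|f_1(n_1)\cdots f_r(n_r)|\le 1$ already recorded in the excerpt, hence
$$
|L_r(f_1,\ldots,f_r;\,s_1+k,s_2,\ldots,s_r)|\le\zeta_r(\sigma_1+k,\sigma_2,\ldots,\sigma_r).
$$
Because $n_1\ge r\ge 2$ in every summand, one can pull out a factor $r^{-k}$, producing a geometric bound of the shape $|L_r(f_1,\ldots,f_r;\,s_1+k,s_2,\ldots,s_r)|\le C_K\,r^{-k}$ uniformly on $K$, with $C_K=\sup_K\zeta_r(\sigma_1,\ldots,\sigma_r)$. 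On the other hand, the Stirling-type asymptotic $\Gamma(s_1+k)/\Gamma(k+2)\sim k^{s_1-2}$ shows that $|(s_1-1)_k|$ grows at most polynomially in $k$, uniformly on $K$. The geometric decay therefore dominates, the series converges normally on $K$, and Fubini applies. With all three ingredients assembled, the identity derived above is valid throughout $U_r$, yielding the translation formula \eqref{tf-malf-1}.
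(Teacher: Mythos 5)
Your proposal is correct and is essentially the paper's own argument read in the reverse direction: your closed form $\sum_{k\ge 0}(s_1-1)_k x^k=\bigl((1-x)^{1-s_1}-1\bigr)/x$ at $x=1/n_1$ is exactly the paper's key identity $(n_1-1)^{1-s_1}-n_1^{1-s_1}=\sum_{k\ge 0}(s_1-1)_k\,n_1^{-s_1-k}$, the telescoping over $n_1>n_2$ is the same, and your convergence bound (geometric decay from $n_1\ge 2$ beating the polynomial growth of $(s_1-1)_k$, uniformly on compacta) is the content of the paper's Proposition~\ref{normal-conv-malf-1}. No gaps.
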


Note that the translation formula \eqref{tf-malf-1} is nothing but the
translation formula \eqref{transform}, written for
$L_r(f_1, \ldots, f_r;~s_1,\ldots,s_r)$ in place of $\z_r(s_1,\ldots,s_r)$.
If $f_1(1) \neq 1$, then we have the following theorem.

\begin{thm}\label{translation-malf-2}
Let $r \ge 2$ be an integer and $f_1, \ldots, f_r$ be  additive characters such that
the partial products $g_i:= \prod_{1 \le j \le i} f_j$ satisfy the condition
$|g_i(1)| \le 1$ for all  $1 \le i \le r$ with $f_1(1) \neq 1$. Then the associated
multiple additive $L$-function $L_r(f_1, \ldots, f_r;~s_1,\ldots,s_r)$
satisfies the following translation formula in $U_r$:
\begin{equation}\label{tf-malf-2}
\begin{split}
& f_1(1) L_{r-1}(g_2,f_3, \ldots, f_r;s_1+s_2,s_3,\ldots,s_r)
+(f_1(1)-1) L_r(f_1, \ldots, f_r;s_1,\ldots,s_r) \\
&=  \sum_{k\ge 0} (s_1)_k \ L_r(f_1, \ldots, f_r;~s_1+k+1,s_2,\ldots,s_r),
\end{split}
\end{equation} 
where the series on the right side converges normally
on any compact subset of $U_r$ .
\end{thm}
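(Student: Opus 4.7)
The plan is to start from the right-hand side, expand via a binomial identity, and then perform an index shift. Write $\alpha := f_1(1)$ and expand each summand:
$$
L_r(f_1,\ldots,f_r;\, s_1+k+1,\, s_2, \ldots, s_r)
= \sum_{n_1 > \cdots > n_r > 0} \frac{f_1(n_1) \cdots f_r(n_r)}{n_1^{s_1+k+1}\, n_2^{s_2} \cdots n_r^{s_r}}.
$$
Assuming for the moment that we may interchange the outer $k$-sum with the $n$-sum and invoke the binomial identity
$$
\sum_{k \ge 0} (s_1)_k\, x^{k+1} = (1-x)^{-s_1} - 1 \qquad (|x|<1),
$$
which follows from $(s_1)_k = \binom{s_1+k}{k+1}$ with $x = 1/n_1$ (valid because $n_1 \ge r \ge 2$), the $k$-sum produces the weight
$$
\frac{1}{n_1^{s_1}}\!\left( \frac{n_1^{s_1}}{(n_1-1)^{s_1}} - 1\right) = \frac{1}{(n_1-1)^{s_1}} - \frac{1}{n_1^{s_1}}.
$$
The second piece immediately contributes $-L_r(f_1,\ldots,f_r;\, s_1,\ldots,s_r)$.

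For the first piece, I would substitute $m = n_1 - 1 \ge 1$, using $f_1(n_1) = \alpha\, f_1(m)$, to rewrite it as
$$
\alpha \sum_{\substack{m \ge n_2 \\ n_2 > n_3 > \cdots > n_r > 0}} \frac{f_1(m)\, f_2(n_2) \cdots f_r(n_r)}{m^{s_1}\, n_2^{s_2} \cdots n_r^{s_r}}.
$$
Splitting off the diagonal $m = n_2$ from the strict range $m > n_2$, and using $f_1(n_2)\, f_2(n_2) = g_2(n_2)$, this equals
$$
\alpha\, L_r(f_1,\ldots,f_r;\, s_1,\ldots,s_r) + \alpha\, L_{r-1}(g_2, f_3,\ldots,f_r;\, s_1+s_2,\, s_3,\ldots,s_r).
$$
Combining with the $-L_r$ contribution from the second piece gives $(\alpha-1)\, L_r + \alpha\, L_{r-1}(g_2,\ldots)$, i.e.\ the left-hand side.

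The main obstacle is justifying the Fubini-type rearrangement and, simultaneously, the normal convergence of the series on the right-hand side on compact subsets of $U_r$. On a compact set $K \subset U_r$, set $\sigma_i := \inf_K \Re(s_i)$, so that $\sigma_1 > 1$. Since $|g_i(1)| \le 1$ for all $i$, the telescoping estimate used in the sufficiency argument preceding \propref{normal-conv-malf} yields $|f_1(n_1) \cdots f_r(n_r)| \le 1$, hence
$$
\bigl| L_r(f_1,\ldots,f_r;\, s_1+k+1,\, s_2, \ldots, s_r) \bigr| \le \zeta_r(\sigma_1+k+1,\, \sigma_2, \ldots, \sigma_r) = O(r^{-k}),
$$
because raising the first exponent by $k$ multiplies every summand by $n_1^{-k} \le r^{-k}$. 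Paired with the polynomial growth $|(s_1)_k| = O(k^{C})$ uniform on $K$ (with $C$ depending on $\sup_K |s_1|$), the product $|(s_1)_k|\cdot|L_r(\ldots;\, s_1+k+1,\ldots)|$ admits a summable majorant on $K$. This simultaneously validates the interchange of summation and the claimed normal convergence on compact subsets of $U_r$, completing the proof.
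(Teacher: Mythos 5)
Your argument is, in substance, the paper's own proof read in reverse: the binomial identity you invoke is exactly the paper's identity \eqref{trick-malf-2}, $(n-1)^{-s}-n^{-s}=\sum_{k\ge 0}(s)_k\,n^{-s-k-1}$, and your index shift $m=n_1-1$ followed by splitting off the diagonal $m=n_2$ (using $f_1(n_2)f_2(n_2)=g_2(n_2)$) is precisely the telescoping step that the paper compresses into the phrase ``Since $f_1(1)\neq 1$, we obtain \dots''. The algebra is correct and the conclusion matches \eqref{tf-malf-2}.

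One small repair is needed in your convergence argument. Taking $\sigma_i:=\inf_K\Re(s_i)$ coordinatewise, the real point $(\sigma_1+k+1,\sigma_2,\ldots,\sigma_r)$ need not lie in $U_r$ for small $k$: for instance $K=\{(1.5,1),(3,-0.5)\}\subset U_2$ gives $\sigma_1+1+\sigma_2=2$, so $\zeta_2(\sigma_1+1,\sigma_2)$ diverges. Hence your majorant $\zeta_r(\sigma_1+k+1,\sigma_2,\ldots,\sigma_r)$ can be infinite for finitely many $k$, and the assertion that it is $O(r^{-k})$ implicitly factors $r^{-k}$ out of a possibly divergent base series. The gap is harmless — each offending term $(s_1)_k\,L_r(f_1,\ldots,f_r;\,s_1+k+1,s_2,\ldots,s_r)$ is holomorphic on a neighbourhood of $K$ because $(s_1+k+1,s_2,\ldots,s_r)\in U_r$ whenever $(s_1,\ldots,s_r)\in U_r$, so finitely many terms do not affect summability — but the cleaner route, which is what \propref{normal-conv-malf-1} (applied with $s_1$ replaced by $s_1+1$) does, is to bound the sup-norm of each \emph{individual} summand by $\frac{(S)_k}{2^{k+1}}\,\|n_1^{-s_1}\cdots n_r^{-s_r}\|_K$ with $S=\sup_K|s_1|$ and then appeal to the normal convergence of $\zeta_r$ on $K$; this justifies both the Fubini interchange over the double family and the normal convergence of the $k$-series in one stroke.
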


\section{Proof of the translation formulas}
To prove \thmref{translation-malf-1}, we need the
identity which is valid for any integer $n \ge 2$ and any complex
number $s$:
\begin{equation}\label{trick-malf-1}
(n-1)^{1-s}-n^{1-s}= \sum _{k\ge 0} (s-1)_k \ n^{-s-k}.
\end{equation}
Whereas to prove \thmref{translation-malf-2}, we need the following version
of \eqref{trick-malf-1}, obtained by replacing $s$ with $s+1$
in \eqref{trick-malf-1}:
\begin{equation}\label{trick-malf-2}
(n-1)^{-s}-n^{-s}= \sum _{k\ge 0} (s)_k \ n^{-s-k-1}.
\end{equation}
The identity \eqref{trick-malf-1} is obtained by writing the left hand side as
$n^{1-s}\left( (1-\frac{1}{n})^{1-s}-1 \right)$ and expanding
$(1-\frac{1}{n})^{1-s}$ as a Taylor series in $\frac{1}{n}$.
Besides, we need the following proposition.

\begin{prop}\label{normal-conv-malf-1}
Let $r \ge 2$ be an integer and $f_1, \ldots, f_r$ be  additive characters such that
the partial products $g_i:= \prod_{1 \le j \le i} f_j$ satisfy the condition
$|g_i(1)| \le 1$ for all  $1 \le i \le r$. Then the family of functions
$$
\left ( (s_1-1)_k \frac{f_1(n_1) \cdots f_r(n_r)}{n_1^{s_1+k}
n_2^{s_2} \cdots n_r^{s_r}} \right )_{n_1>\cdots>n_r>0, k\ge 0}
$$
is normally summable on compact subsets of $U_r$.
\end{prop}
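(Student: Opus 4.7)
The plan is to show that for any compact $K \subset U_r$, the double-indexed family is normally summable by bounding each sup-norm explicitly and summing. The key tools are the identity \eqref{trick-malf-1}, the hypothesis $|g_i(1)| \le 1$ on the partial products, and the known convergence of the multiple zeta value in $U_r$.

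First, I would reduce to $K = K_1 \times \cdots \times K_r$ being a product of closed polydisks contained in $U_r$, via a standard covering argument: every $s \in K$ admits a product-polydisk neighborhood inside $U_r$ and finitely many such cover $K$, and normal summability on each piece suffices. On such a product, set $\sigma_i^- := \inf_{K_i} \Re s_i$ and $\tau := 1 + \sup_{K_1} |s_1|$. Since $K$ is a product lying in $U_r$, we have $\sigma_1^- + \cdots + \sigma_i^- > i$ for all $i$; in particular $\sigma_1^- > 1$ and $\tau > 1$. Using the bound $|f_1(n_1) \cdots f_r(n_r)| \le |g_1(1)|^{n_1-n_2} \cdots |g_r(1)|^{n_r} \le 1$ (as in the sufficiency argument for absolute convergence given earlier in the excerpt) together with the Pochhammer estimate $|(s_1-1)_k| \le \frac{\tau}{\tau-1}(\tau-1)_k$ on $K_1$ (derived factor-by-factor from $|s_1 - 1| \le \tau$ and $|s_1 + j| \le \tau + j - 1$ for $j \ge 0$), I would conclude
\[
\sup_K \left| (s_1-1)_k \frac{f_1(n_1) \cdots f_r(n_r)}{n_1^{s_1+k} n_2^{s_2} \cdots n_r^{s_r}} \right| \le \frac{\tau}{\tau - 1}\, (\tau - 1)_k \, n_1^{-\sigma_1^- - k} n_2^{-\sigma_2^-} \cdots n_r^{-\sigma_r^-}.
\]

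Next, I would carry out the $k$-summation first. Applying \eqref{trick-malf-1} with $s$ replaced by $\tau$,
\[
\sum_{k \ge 0} (\tau - 1)_k\, n_1^{-\sigma_1^- - k} = n_1^{\tau - \sigma_1^-} \left[(n_1-1)^{1-\tau} - n_1^{1-\tau}\right],
\]
and the mean value theorem dominates the bracket by $(\tau - 1)(n_1 - 1)^{-\tau}$. Because $n_1 \ge r \ge 2$, we have $(n_1 - 1)^{-\tau} \le 2^\tau n_1^{-\tau}$, so the $k$-sum is at most $(\tau - 1) 2^\tau\, n_1^{-\sigma_1^-}$. Substituting back, the full double sum is bounded by a constant multiple of $\sum_{n_1 > \cdots > n_r > 0} n_1^{-\sigma_1^-} \cdots n_r^{-\sigma_r^-} = \zeta_r(\sigma_1^-, \ldots, \sigma_r^-) < \infty$, since $(\sigma_1^-, \ldots, \sigma_r^-) \in U_r$.

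The principal obstacle is decoupling the $k$-decay from the $n_1$-decay cleanly enough to dominate a convergent multiple zeta value; this is precisely what the Pochhammer bound combined with the telescoping identity \eqref{trick-malf-1} accomplishes. A secondary care-point is the reduction to product compacts, without which the individual infima $\sigma_i^-$ need not satisfy the defining inequalities of $U_r$ and the final comparison with $\zeta_r(\sigma_1^-, \ldots, \sigma_r^-)$ would not be available.
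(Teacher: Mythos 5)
Your proof is correct, but it handles the two main steps differently from the paper. For the $k$-summation, the paper simply writes $S:=\sup_K|s_1-1|$, bounds $|(s_1-1)_k|\,n_1^{-k}\le (S)_k\,2^{-k}$ using only $n_1\ge 2$, and invokes the elementary fact that $\sum_{k\ge 0}(a)_k/2^k$ converges for any real $a$; you instead sum $\sum_k(\tau-1)_k\,n_1^{-\tau-k}$ in closed form via \eqref{trick-malf-1} at the real point $\tau$ and then estimate with the mean value theorem. For the $(n_1,\dots,n_r)$-summation, the paper directly cites the normal summability of the family $(n_1^{-s_1}\cdots n_r^{-s_r})$ on arbitrary compact subsets of $U_r$ (known from \cite{MSV}), so it never needs your reduction to product compacts; you essentially re-prove that normal summability by comparing with $\zeta_r(\sigma_1^-,\dots,\sigma_r^-)$ at a real point of $U_r$, which is why the product-compact covering argument becomes necessary in your version. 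Both routes are sound: the paper's is more economical and self-contained given the cited background, while yours is more explicit (and your closed-form evaluation of the $k$-sum yields a cleaner quantitative bound), at the cost of extra machinery that the crude decoupling $n_1^{-k}\le 2^{-k}$ renders unnecessary.
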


\begin{proof}
From the hypothesis we have
$$
\left| f_1(n_1) \cdots f_r(n_r) \right| \le 1.
$$
Now let $K$ be a compact subset of $U_r$ and set
$$
S :=  \sup_{ (s_1,\ldots,s_r) \in K} |s_1 - 1 |. 
$$ 
Since $r \ge 2$, for any strictly decreasing sequence 
$n_1, \ldots,n_r$ of $r$ positive integers, we have $n_1 \ge 2$. 
Hence for $k \ge 0$, we get
$$
\left\| (s_1-1)_k \ n_1^{-s_1-k}n_2^{-s_2}\cdots n_r^{-s_r} \right\|_K
~\le~
\frac{(S)_k}{2^k} \|n_1^{-s_1}\cdots n_r^{-s_r}\|_K.
$$
Thus the proof follows from the normal convergence
of the multiple zeta function of depth $r$ and the fact that
for any real number $a$, the series
$$
\sum_{k\ge 0} \frac{(a)_k}{2^k}
$$
is convergent.
\end{proof}

We are now ready to prove \thmref{translation-malf-1}
and \thmref{translation-malf-2}.

\subsection{Proof of \thmref{translation-malf-1}}
We replace $n,s$ by $n_1,s_1$ in \eqref{trick-malf-1}  and
multiply $\frac{f_1(n_1) \cdots f_r(n_r)}{n_2^{s_2} \cdots n_r^{s_r}}$
to both sides of \eqref{trick-malf-1} and obtain that
$$
\left( \frac{1}{(n_1-1)^{s_1-1}} - \frac{1}{n_1^{s_1-1}} \right) 
\frac{f_1(n_1) \cdots f_r(n_r)}{n_2^{s_2} \cdots n_r^{s_r}}
=\sum_{k \ge 0} (s_1-1)_k \frac{f_1(n_1) \cdots f_r(n_r)}{n_1^{s_1+k}
n_2^{s_2} \cdots n_r^{s_r}}.
$$
Now we sum for $n_1>\cdots>n_r>0$. Since $f_1(1)=1$,
using \propref{normal-conv-malf-1}, we get
$$
L_{r-1}(f_2, \ldots, f_r;~s_1+s_2-1,s_3,\ldots,s_r)
=\sum_{k\ge 0} (s_1-1)_k \
L_r(f_1, \ldots, f_r;~s_1+k,s_2,\ldots,s_r).
$$
This together with \propref{normal-conv-malf-1} completes the proof. \qed

\subsection{Proof of \thmref{translation-malf-2}}
For this we replace $n,s$ by $n_1,s_1$ in \eqref{trick-malf-2}  and
multiply $\frac{f_1(n_1) \cdots f_r(n_r)}{n_2^{s_2} \cdots n_r^{s_r}}$
to both sides of \eqref{trick-malf-2} and obtain that
$$
\left( \frac{1}{(n_1-1)^{s_1}} - \frac{1}{n_1^{s_1}} \right) 
\frac{f_1(n_1) \cdots f_r(n_r)}{n_2^{s_2} \cdots n_r^{s_r}}
=\sum_{k \ge 0} (s_1)_k \frac{f_1(n_1) \cdots f_r(n_r)}{n_1^{s_1+k+1}
n_2^{s_2} \cdots n_r^{s_r}}.
$$
As before, we sum for $n_1>\cdots>n_r>0$ and use
\propref{normal-conv-malf-1} with $s_1$ replaced by $s_1+1$.
Since $f_1(1) \neq 1$, we obtain
\begin{equation*}
\begin{split}
& f_1(1) L_{r-1}(g_2,f_3, \ldots, f_r;s_1+s_2,s_3,\ldots,s_r)
+(f_1(1)-1) L_r(f_1, \ldots, f_r;s_1,\ldots,s_r) \\
&=  \sum_{k\ge 0} (s_1)_k \ L_r(f_1, \ldots, f_r;~s_1+k+1,s_2,\ldots,s_r).
\end{split}
\end{equation*}
This together with \propref{normal-conv-malf-1} completes the proof. \qed

\section{Meromorphic continuation}
In this section, we establish the meromorphic continuation of
the multiple additive $L$-functions using the translation
formulas \eqref{tf-malf-1} and \eqref{tf-malf-2}.

\begin{thm}\label{anacont-malf}
Let $r \ge 2$ be an integer and $f_1, \ldots, f_r$ be  additive characters such that
the partial products $g_i:= \prod_{1 \le j \le i} f_j$ satisfy the condition
$|g_i(1)| \le 1$ for all  $1 \le i \le r$. Then the associated
multiple additive  $L$-function $L_r(f_1, \ldots, f_r;~s_1,\ldots,s_r)$ extends
to a meromorphic function on $\C^r$ satisfying the following translation formulas
on $\C^r$:
\begin{equation}\label{tf-malf-3}
\begin{split}
& L_{r-1}(f_2, \ldots, f_r;~s_1+s_2-1,s_3,\ldots,s_r) \\
& =\sum_{k\ge 0} (s_1-1)_k \
L_r(f_1, \ldots, f_r;~s_1+k,s_2,\ldots,s_r)
\phantom{m} \text{if} \phantom{m} f_1(1)=1 
\end{split}
\end{equation}
and
\begin{equation}\label{tf-malf-4}
\begin{split}
& f_1(1) L_{r-1}(g_2,f_3, \ldots, f_r;s_1+s_2,s_3,\ldots,s_r)
+(f_1(1)-1) L_r(f_1, \ldots, f_r;s_1,\ldots,s_r) \\
& =  \sum_{k\ge 0} (s_1)_k \ L_r(f_1, \ldots, f_r;~s_1+k+1,s_2,\ldots,s_r)
\phantom{m} \text{if} \phantom{m} f_1(1) \neq 1.
\end{split}
\end{equation}
The series of meromorphic functions  on the right hand sides of
\eqref{tf-malf-3} and \eqref{tf-malf-4}  converge normally on every compact
subset of $\C^r$.
\end{thm}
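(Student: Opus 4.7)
The plan is to induct on the depth $r$. The base case $r = 1$ is classical: $L_1(f_1; s_1) = \sum_{n \ge 1} f_1(1)^n / n^{s_1}$ is either the Riemann zeta function (when $f_1(1) = 1$, with a simple pole at $s_1 = 1$) or a polylogarithm entire in $s_1$ (when $f_1(1) \ne 1$ and $|f_1(1)| \le 1$), so it extends meromorphically to $\C$. For $r \ge 2$, assume the result at depth $r-1$. A direct computation shows the induction hypothesis applies to the depth-$(r-1)$ functions appearing on the left of \eqref{tf-malf-1} and \eqref{tf-malf-2}: in each case the partial products of the shortened tuple are precisely $g_2, g_3, \ldots, g_r$ (using $f_1(1) = 1$ in the first case), which satisfy $|g_i(1)| \le 1$ by hypothesis.

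Next, rearrange the translation formulas to solve for $L_r$. When $f_1(1) = 1$, split off the $k = 0$ term of \eqref{tf-malf-1} to obtain
\begin{equation*}
(s_1 - 1)\,L_r(f_1, \ldots, f_r; s_1, \ldots, s_r) = L_{r-1}(f_2, \ldots, f_r; s_1+s_2-1, s_3, \ldots, s_r) - \sum_{k \ge 1}(s_1-1)_k\,L_r(f_1, \ldots, f_r; s_1+k, s_2, \ldots, s_r).
\end{equation*}
When $f_1(1) \ne 1$, divide \eqref{tf-malf-2} by the nonzero scalar $f_1(1) - 1$ to obtain an analogous identity, with no spurious pole. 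The extension proceeds iteratively: given a compact $K \subset \C^r$, choose $N$ large enough that $(s_1 + N, s_2, \ldots, s_r) \in U_r$ throughout $K$, which is possible because $K$ is bounded. Then \propref{normal-conv-malf-1}, applied with $s_1$ replaced by $s_1 + N$ (respectively $s_1 + N + 1$), shows that the tail $\sum_{k \ge N}$ converges normally and defines a holomorphic function on a neighbourhood of $K$. The finitely many initial terms $k = 0, 1, \ldots, N - 1$ are handled by applying the rearrangement recursively at most $N$ times, each step reducing the needed shift by one and eventually bringing all $L_r$-arguments into $U_r$. This produces a meromorphic extension of $L_r$ to a neighbourhood of $K$, and since $K$ was arbitrary, to all of $\C^r$.

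Once the meromorphic continuation is in place, the translation formulas \eqref{tf-malf-3} and \eqref{tf-malf-4} promote from identities on $U_r$ to identities of meromorphic functions on $\C^r$ by analytic continuation. The claimed normal convergence of the right-hand sides on compact subsets of $\C^r$, in the sense of normal summability of a series of meromorphic functions recalled earlier, follows from the same head-plus-tail decomposition: finitely many initial terms whose pole loci intersect $K$ are absorbed into the exceptional finite set $J$ of the definition, while the tail is controlled by \propref{normal-conv-malf-1}. The main obstacle lies in the iterative step — the rearranged formula expresses $L_r$ at $(s_1, s_2, \ldots, s_r)$ in terms of $L_r$ at shifted arguments $(s_1 + k, s_2, \ldots, s_r)$, so one must verify that after finitely many iterations all shifted arguments lie in $U_r$ and that the extension is consistent across overlapping compacta; this is guaranteed because each shift $s_1 \mapsto s_1 + 1$ strictly weakens the constraints defining $U_r$, and uniqueness of meromorphic extension forces consistency.
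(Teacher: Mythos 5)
Your proposal is correct and follows essentially the same route as the paper: induction on the depth $r$, solving the translation formula for $L_r$ (dividing by $s_1-1$ when $f_1(1)=1$ and by the nonzero constant $f_1(1)-1$ otherwise), and a head-plus-tail decomposition in which the tail is controlled by a normal-convergence estimate on a translate of $U_r$ while the finitely many head terms are supplied by the previous stage of the iteration. The only cosmetic difference is that the paper organizes the iteration via the explicit open cover $U_r(m)$, $m\ge 1$, together with the dedicated \propref{normal-conv-malf-2}, whereas you work over an arbitrary compact set using a shifted application of \propref{normal-conv-malf-1}; these are interchangeable.
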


\begin{rmk}\label{anacont-alf}
\rm  When $r=1$ and $f_1(1)=1$, the associated additive $L$-function
is nothing but the Riemann zeta function, for which we know that
it can be meromorphically continued to $\C$ with only
a simple pole at $s=1$ with residue $1$.
When $r = 1$ and $f_1(1) \neq 1$, it is also known
that the associated additive $L$-function has an analytic continuation
to the entire complex plane (for a proof see \cite{BS}).
\end{rmk}

We prove \thmref{anacont-malf} by induction on depth $r$.
Assuming induction hypothesis for multiple additive  $L$-functions
of depth $(r-1)$, we first extend the multiple additive  $L$-function
$L_r(f_1, \ldots, f_r;~s_1,\ldots,s_r)$ as a meromorphic function
to $U_r(m)$ for each $m \ge 0$, where
$$
U_r(m):=\{(s_1,\ldots,s_r) \in \C^r ~:~ \Re(s_1+\cdots +s_i) > i - m ~
\text{ for all } ~ 1 \le i \le r\}.
$$
Since  open sets of the form $U_r(m)$ form an open cover of $\C^r$,
we get the coveted meromorphic continuation to $\C^r$.
In this case we need the following variant of  \propref{normal-conv-malf-1}.

\begin{prop}\label{normal-conv-malf-2}
Let $r \ge 2$ be an integer and $f_1, \ldots, f_r$ be  additive characters such that
the partial products $g_i:= \prod_{1 \le j \le i} f_j$ satisfy the condition
$|g_i(1)| \le 1$ for all  $1 \le i \le r$. Then the family of functions
$$
\left ( (s_1)_k \frac{f_1(n_1) \cdots f_r(n_r)}{n_1^{s_1+k+1}
n_2^{s_2} \cdots n_r^{s_r}} \right )_{n_1>\cdots>n_r>0, k\ge m-1}
$$
is normally summable on compact subsets of $U_r(m)$.
\end{prop}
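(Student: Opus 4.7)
The plan is to imitate the proof of \propref{normal-conv-malf-1}, with one extra bookkeeping step to handle the enlarged domain $U_r(m)$ and the shifted index range $k \ge m-1$. First, exactly as in \S 2, the hypothesis $|g_i(1)| \le 1$ for $1 \le i \le r$ together with $n_1 > n_2 > \cdots > n_r > 0$ yields the uniform bound
$$
|f_1(n_1) \cdots f_r(n_r)|
\le |g_1(1)|^{n_1-n_2} \cdots |g_{r-1}(1)|^{n_{r-1}-n_r} |g_r(1)|^{n_r}
\le 1.
$$

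Next I would fix a compact set $K \subset U_r(m)$ and set $S := \sup_{(s_1,\ldots,s_r) \in K} |s_1|$. The key observation is that the affine shift $(s_1,s_2,\ldots,s_r) \mapsto (s_1+m, s_2, \ldots, s_r)$ sends $U_r(m)$ onto $U_r$, so the image of $K$ is a compact subset of $U_r$. Consequently, by the normal convergence of the multiple zeta function of depth $r$, the family
$$
\left( n_1^{-s_1-m} n_2^{-s_2} \cdots n_r^{-s_r} \right)_{n_1>\cdots>n_r>0}
$$
is normally summable on $K$.

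The main estimate then comes from the splitting
$$
(s_1)_k \frac{f_1(n_1) \cdots f_r(n_r)}{n_1^{s_1+k+1} n_2^{s_2} \cdots n_r^{s_r}}
= (s_1)_k \cdot \frac{1}{n_1^{k+1-m}} \cdot \frac{f_1(n_1) \cdots f_r(n_r)}{n_1^{s_1+m} n_2^{s_2} \cdots n_r^{s_r}}.
$$
For $k \ge m-1$ the exponent $k+1-m$ is non-negative, and since $r \ge 2$ forces $n_1 \ge 2$, one has $n_1^{-(k+1-m)} \le 2^{m-1-k}$. Combined with the elementary bound $|(s_1)_k| \le (S)_k$ on $K$ (from $|s_1+j| \le S+j$ for $j \ge 0$), this gives
$$
\left\| (s_1)_k \ n_1^{-s_1-k-1} n_2^{-s_2} \cdots n_r^{-s_r} \right\|_K
\le \frac{(S)_k}{2^{k-m+1}} \, \left\| n_1^{-s_1-m} n_2^{-s_2} \cdots n_r^{-s_r} \right\|_K.
$$

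Summing first over $(n_1,\ldots,n_r)$ and then over $k \ge m-1$, normal summability on $K$ reduces to the convergence of $\sum_{k \ge 0} (S)_k / 2^k$, which is already invoked in the proof of \propref{normal-conv-malf-1}. There is no real obstacle here; the only point worth emphasising is that the index restriction $k \ge m-1$ in the statement is precisely what allows the exponent $k+1-m$ to be non-negative, so that the factor $n_1^{-(k+1-m)}$ can be absorbed into the geometric bound $2^{m-1-k}$ and the remaining factor repackaged into the multiple zeta function at the shifted argument $(s_1+m, s_2, \ldots, s_r)$.
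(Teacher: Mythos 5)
Your proof is correct and follows essentially the same route as the paper's: the same bound $\| (s_1)_k\, n_1^{-s_1-k-1}n_2^{-s_2}\cdots n_r^{-s_r}\|_K \le \frac{(S)_k}{2^{k-m+1}}\,\| n_1^{-s_1-m}n_2^{-s_2}\cdots n_r^{-s_r}\|_K$, the same observation that shifting $s_1$ by $m$ carries $U_r(m)$ onto $U_r$, and the same appeal to the normal convergence of the depth-$r$ multiple zeta function together with the convergence of $\sum_k (S)_k/2^k$. Your write-up merely makes explicit the factorisation $n_1^{-(k+1-m)} \le 2^{m-1-k}$ and the bound $|(s_1)_k|\le (S)_k$ that the paper leaves implicit; no changes are needed.
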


\begin{proof}
Let $K$ be a compact subset of $U_r(m)$ and
$S:=\sup_{(s_1,\ldots,s_r) \in K} |s_1|$. Then for $k \ge m-1$,
$$
\left\|  (s_1)_k \frac{f_1(n_1) \cdots f_r(n_r)}{n_1^{s_1+k+1}
n_2^{s_2} \cdots n_r^{s_r}} \right \|
\le \frac{(S)_k}{2^{k-m+1}} \left\|  \frac{1}{n_1^{s_1+m}
n_2^{s_2} \cdots n_r^{s_r}} \right \|.
$$
Now as $(s_1,\ldots,s_r)$ varies over $U_r(m)$,
$(s_1+m,\ldots,s_r)$ varies over $U_r$. Then the proof follows
from the normal convergence
of the multiple zeta function of depth $r$ as the series
$\sum_{k \ge m-1}  \frac{(S)_k}{2^{k-m+1}}$ converges.
\end{proof}

\subsection{Proof of \thmref{anacont-malf}}
As mentioned before, we prove this theorem by induction on depth $r$.
For $r=2$, the left hand sides of \eqref{tf-malf-3} and \eqref{tf-malf-4}
have a meromorphic continuation to $\C^2$ by \rmkref{anacont-alf}.
If $r \ge 3$, then the left hand side of \eqref{tf-malf-3} and
the first term in the left hand side of \eqref{tf-malf-4}
have a meromorphic continuation to $\C^r$ by the induction hypothesis.

We now establish the meromorphic
continuation of the multiple additive $L$-function \linebreak
$L_r(f_1, \ldots, f_r;~s_1,\ldots,s_r)$ separately for each of the cases
$f_1(1)=1$ and $f_1(1) \neq 1$.

First we consider the case $f_1(1)=1$. As we have shown that
 in this case the
multiple additive  $L$-function $L_r(f_1, \ldots, f_r;~s_1,\ldots,s_r)$
satisfies the translation formula \eqref{tf-malf-1} in $U_r$.
Note that the translation formula \eqref{tf-malf-1} is exactly the
same as \eqref{transform} which is satisfied by the multiple 
zeta functions of depth $r$.
Hence in this case the meromorphic continuation follows
exactly as in the case of the multiple zeta function
and therefore we omit the proof.

Next we consider the case $f_1(1) \not =1$. Now from
induction hypothesis we know that
$$
f_1(1) L_{r-1}(g_2,f_3, \ldots, f_r;s_1+s_2,s_3,\ldots,s_r)
$$
has a meromorphic continuation to $\C^r$ and by
\propref{normal-conv-malf-2} (for $m=1$), we have that
 $$
 \sum_{k\ge 0} (s_1)_k \ L_r(f_1, \ldots, f_r;~s_1+k+1,s_2,\ldots,s_r)
$$
is a holomorphic function on $U_r(1)$. Hence we can extend
$L_r(f_1, \ldots, f_r;~s_1,\ldots,s_r)$ to $U_r(1)$ as a meromorphic
function satisfying \eqref{tf-malf-2}.

Now we assume that  for an integer
$m \ge 2$, $L_r(f_1, \ldots, f_r;~s_1,\ldots,s_r)$
has been extended meromorphically to $U_r(m-1)$ satisfying
\eqref{tf-malf-2}. Again by \propref{normal-conv-malf-2}, the series
$$
\sum_{k\ge m-1} (s_1)_k \ L_r(f_1, \ldots, f_r;~s_1+k+1,s_2,\ldots,s_r)
$$
defines a holomorphic function on $U_r(m)$.
As $L_r(f_1, \ldots, f_r;~s_1,\ldots,s_r)$
has been extended meromorphically to $U_r(m-1)$,
we can extend $L_r(f_1, \ldots, f_r;~s_1+k+1,\ldots,s_r)$
meromorphically to $U_r(m)$ for all $0 \le k \le m-2$.

Thus we can now extend $L_r(f_1, \ldots, f_r;~s_1,\ldots,s_r)$
meromorphically to $U_r(m)$ by means of \eqref{tf-malf-2}.
This completes the proof as  $\{U_r(m): m \ge 1\}$ is an
open cover of $\C^r$. \qed

\section{Matrix formulation of the translation formulas}
The method of \textit{matrix formulation of translation formulas}
was introduced in \cite{MSV} in order to write down the translation
formula \eqref{transform} in an expression involving infinite matrices.
We adapt this method to obtain similar matrix formulation of
the translation formulas \eqref{tf-malf-3} and \eqref{tf-malf-4}.
More details about the method of matrix formulation of translation formulas
can be found in \cite{MSV}.

\subsection{Matrix formulation of \eqref{tf-malf-3}}

We recall that the translation formula \eqref{tf-malf-3} is nothing but the
translation formula \eqref{transform}, written for
$L_r(f_1, \ldots, f_r;~s_1,\ldots,s_r)$ in place of $\z_r(s_1,\ldots,s_r)$.
Thus the matrix formulation for \eqref{tf-malf-3} will be
identical to the one for \eqref{transform}. For sake of completeness
we discuss this in brief.

The translation formula \eqref{tf-malf-3} together with
the other relations obtained by applying successively the
change of variables $s_1 \mapsto s_1+n$ to it  for
each  $n\geq 0$ is equivalent to the single relation
\begin{equation} \label{mat-tf-malf-1}
{\bf V}_{r-1}(f_2,f_3,\ldots,f_r; \ s_1+s_2-1,s_3,\ldots,s_r)
={\bf A_1}(s_1-1) {\bf V}_r(f_1,\ldots,f_r;\ s_1,\ldots,s_r),
\end{equation}
where ${\bf V}_r(f_1,\ldots,f_r;\ s_1,\ldots,s_r)$ denotes the 
infinite column vector
\begin{equation} \label{eqV-malf}
{\bf V}_r(f_1,\ldots,f_r;\ s_1,\ldots,s_r) := \left( \begin{array}{c}
L_r(f_1,\ldots,f_r;\ s_1,s_2,\ldots,s_r)\\
L_r(f_1,\ldots,f_r;\ s_1+1,s_2,\ldots,s_r)\\
L_r(f_1,\ldots,f_r;\ s_1+2,s_2,\ldots,s_r)\\
\vdots
\end{array} \right)
\end{equation}
and for an indeterminate $t$, ${\bf A_1}(t)$ is defined by
\begin{equation}\label{eqA1-malf}
{\bf A_1}(t) := \left( \begin{array}{c c c c}
t & \frac{t(t+1)}{2!} & \frac{t(t+1)(t+2)}{3!} & \cdots\\
0 & t+1 & \frac{(t+1)(t+2)}{2!} & \cdots\\
0 & 0 & t+2 & \cdots\\
\vdots & \vdots & \vdots & \ddots
\end{array} \right).
\end{equation}
The matrix ${\bf A_1}(t)$ is invertible in $\mathbf{T}(\Q(t))$ and
its inverse matrix ${\bf B_1}(t)$ is given by
\begin{equation} \label{eqBt}
{\bf B_1}(t) = \left( \begin{array}{c c c c c}
\frac{1}{t} & \frac{B_1}{1!} &  \frac{(t+1)B_2}{2!} &
\frac{(t+1)(t+2)B_3}{3!} & \cdots \\
0 & \frac{1}{t+1} & \frac{B_1}{1!} & \frac{(t+2)B_2}{2!} & \cdots \\
0 & 0 & \frac{1}{t+2} & \frac{B_1}{1!} & \cdots \\
0 & 0 & 0 & \frac{1}{t+3} & \cdots\\
\vdots & \vdots & \vdots & \vdots & \ddots
\end{array} \right).
\end{equation}
Here $\mathbf{T}(\Q(t))$ denotes the ring of upper triangular matrices
of type $\mathbb N \times \mathbb N$ with coefficients in $\Q(t)$.

This suggests that one may attempt to express the column vector
${\bf V}_r(f_1,\ldots,f_r;\ s_1,\ldots,s_r)$ in terms of
${\bf V}_{r-1}(f_2,f_3,\ldots,f_r; \ s_1+s_2-1,s_3,\ldots,s_r)$
by multiplying both sides of \eqref{mat-tf-malf-1} by ${\bf B_1}(s_1-1)$.
However this is not allowed. The reason is that the entries of the
formal product of the matrix ${\bf B_1}(s_1-1)$ with the column vector
${\bf V}_{r-1}(s_1+s_2-1,s_3,\ldots,s_r)$ are not convergent series.

To get around this difficulty we choose an integer $q \ge 1$ and define
$$
I = I_q := \{k \in \N : 0 \le k \le q-1\}  \ \text{ and } \ 
J = J_q := \{k \in \N : k \ge q\}.
$$
This allows us to write the previous matrices
as block matrices, for example
$$
{\bf A_1}(t) = \left( \begin{array}{c c}
{\bf A_1}^{II}(t) & {\bf A_1}^{IJ}(t)\\
{\bf 0}^{JI} & {\bf A_1}^{JJ}(t)
\end{array} \right).
$$
With these notations, we rewrite \eqref{mat-tf-malf-1} as
follows:
\begin{equation*}
\begin{split}
&{\bf V}_{r-1}^I (f_2,f_3,\ldots,f_r; \ s_1+s_2-1,s_3,\ldots,s_r)\\
&={\bf A_1}^{II}(s_1-1) {\bf V}_r^I(f_1,\ldots,f_r;\ s_1,\ldots,s_r)
+{\bf A_1}^{IJ}(s_1-1) {\bf V}_r^J(f_1,\ldots,f_r;\ s_1,\ldots,s_r).
\end{split}
\end{equation*}
Now inverting ${\bf A_1}^{II}(s_1-1)$, we can
reformulate \eqref{mat-tf-malf-1} to write
\begin{equation}\label{vbvw-malf}
\begin{split}
&{\bf V}_r^I(f_1,\ldots,f_r;\ s_1,\ldots,s_r) \\
&= {\bf B_1}^{II}(s_1-1)
{\bf V}_{r-1}^I(f_2,\ldots,f_r;\ s_1+s_2-1,s_3,\ldots,s_r)
+ {\bf W}^I (f_1,\ldots,f_r;\ s_1,\ldots,s_r),
\end{split}
\end{equation}
where
\begin{equation}\label{wuuv-malf}
{\bf W}^I (f_1,\ldots,f_r;\ s_1,\ldots,s_r) =  - {\bf B_1}^{II}(s_1-1) {\bf A_1}^{IJ}(s_1)
{\bf V}_r^J(f_1,\ldots,f_r;\ s_1,\ldots,s_r).
\end{equation}
All the series of meromorphic functions involved in the products of
matrices in formulas \eqref{vbvw-malf} and \eqref{wuuv-malf} converge normally
on all compact subsets of $\C^r$. Moreover, all the entries of the
matrices on the right hand side of \eqref{wuuv-malf} are holomorphic in
the open set $U_r(q)$, the translate of $U_r$ by $(-q,0,\ldots,0)$. Therefore
the entries of ${\bf W}^I(f_1,\ldots,f_r;\ s_1,\ldots,s_r)$ are also holomorphic
in $U_r(q)$.

If we write $\xi_q(f_1,\ldots,f_r;\ s_1,\ldots,s_r)$ to be the first entry
of ${\bf W}^I(f_1,\ldots,f_r;\ s_1,\ldots,s_r)$,
we then get from \eqref{vbvw-malf} that
\begin{equation}\label{explicit-malf-1}
\begin{split}
& L_r(f_1,\ldots,f_r;\ s_1,\ldots,s_r) \\
& =\frac{1}{s_1-1} \ L_{r-1}(f_2,\ldots,f_r;\ s_1+s_2-1,s_3,\ldots,s_r)\\
& \ \ + \sum_{k=0}^{q-2} \frac{s_1\cdots (s_1+k-1)}{(k+1)!} B_{k+1} \
L_{r-1}(f_2,\ldots,f_r;\ s_1+s_2+k,s_3,\ldots,s_r)\\
& \ \ + \xi_q(f_1,\ldots,f_r;\ s_1,\ldots,s_r),
\end{split}
\end{equation}
where $\xi_q(f_1,\ldots,f_r;\ s_1,\ldots,s_r)$ is holomorphic 
in the open set $U_r(q)$.

\subsection{Matrix formulation of \eqref{tf-malf-4}}

On the other hand, the translation formula \eqref{tf-malf-4}
and the other relations obtained by applying successively the
change of variable $s_1 \mapsto s_1+n$ to it for
each  $n\geq 0$, is equivalent to the relation
\begin{equation} \label{mat-tf-malf-2}
f_1(1) {\bf V}_{r-1}(g_2,f_3,\ldots,f_r; \ s_1+s_2,s_3,\ldots,s_r)
={\bf A_2}(f_1;s_1) {\bf V}_r(f_1,\ldots,f_r;\ s_1,\ldots,s_r),
\end{equation}
where for an indeterminate $t$ and a non-trivial group homomorphism
$f:\Z \to \C^*$, the matrix ${\bf A_2}(f;t)$ 
is defined as follows:
\begin{equation}
\label{eqA3-malf}
{\bf A_2}(f;t) := \left( \begin{array}{c c c c c}
1-f(1) & t & \frac{t(t+1)}{2!} & \frac{t(t+1)(t+2)}{3!} & \cdots\\
0 & 1-f(1) & t+1 & \frac{(t+1)(t+2)}{2!} & \cdots\\
0 & 0 & 1-f(1) & t+2 & \cdots\\
0 & 0 & 0 & 1-f(1) & \cdots\\
\vdots & \vdots & \vdots & \vdots & \ddots
\end{array} \right).
\end{equation}
As $f(1) \neq 1$, the matrix ${\bf A_2}(f;t)$ is invertible in ${\bf T}(\C(t))$,
the ring of upper triangular matrices of type $\N \times \N$
with coefficients in $\C(t)$. To find
the inverse of ${\bf A_2}(f;t)$ explicitly, we notice that
$$
{\bf A_2}(f;t)=e({\bf M}(t)) - f(1) {\bf I}_{\N \times \N},
$$
where ${\bf I}_{\N \times \N}$
denotes the identity matrix in ${\bf T}(\C(t))$
and
$$
{\bf M}(t) := \left( \begin{array}{c c c c}
0 & t & 0 & \cdots\\
0 & 0 & t+1 & \cdots\\
0 & 0 & 0 & \cdots\\
\vdots & \vdots & \vdots & \ddots
\end{array} \right).
$$
Now we need to invert the power series $e^x - c$
in $\C[[x]]$ for some $c \neq 1$. For the sake of convenience,
let us replace the variable $x$ by $(c-1)y$, where $c \neq 1$.
Our aim is to invert the power series $e^{(c-1)y}-c$ when $c \neq 1$. 
For this we recall the generating function for the Eulerian polynomials. The
Eulerian polynomials $A_n(t)$'s are defined by the following exponential
generating function
$$
\sum_{n \ge 0} A_n(t) \frac{y^n}{n!}
= \frac{1-t}{e^{(t-1)y}-t}.
$$
Thus
$$
\frac{1}{e^{(c-1)y}-c}
= \frac{1}{1-c} \sum_{n \ge 0} A_n(c) \frac{y^n}{n!}
\phantom{m}\text{as}\phantom{m}
c \neq 1.
$$
Hence
$$
\frac{1}{e^{x}-c}
=\frac{1}{1-c} \sum_{n \ge 0} A_n(c) \frac{x^n}{(c-1)^n n!}
\phantom{m}\text{when}\phantom{m}
c \neq 1.
$$
This in turn yields that the inverse of ${\bf A_2}(f;t)$,
which we denote by ${\bf B_2}(f;t)$, is given by the formula
\begin{equation}
\label{eqB3-malf}
{\bf B_2}(f;t) = \frac{1}{1-f(1)}  \sum_{n \ge 0}
A_n(f(1)) \frac{{\bf M}(t)^n}{(f(1)-1)^n n!}
\end{equation}
as $f(1) \neq 1$. It can be calculated that $A_0(t)=A_1(t)=1$. Hence
$$
{\bf B_2}(f;t) = \frac{1}{1-f(1)}
 \left( \begin{array}{c c c c c}
1 & \frac{1}{f(1)-1}t
& \frac{A_2(f(1))}{(f(1)-1)^2} \frac{t(t+1)}{2!}
& \frac{A_3(f(1))}{(f(1)-1)^3}\frac{t(t+1)(t+2)}{3!}
& \cdots \\
0 & 1 & \frac{1}{f(1)-1} (t+1)
& \frac{A_2(f(1))}{(f(1)-1)^2} \frac{(t+1)(t+2)}{2!} & \cdots \\
0 & 0 & 1 & \frac{1}{f(1)-1} (t+2) & \cdots \\
0 & 0 & 0 & 1 & \cdots\\
\vdots & \vdots & \vdots & \vdots & \ddots
\end{array} \right).
$$
Here also one cannot multiply both sides of \eqref{mat-tf-malf-2}
by ${\bf B_2}(f_1;s_1)$ to express the column vector
${\bf V}_r(f_1,\ldots,f_r;\ s_1,\ldots,s_r)$ in terms of
${\bf V}_{r-1}(g_2,f_3,\ldots,f_r;\ s_1+s_2,s_3,\ldots,s_r)$.
This is not possible as the coefficients of the Eulerian
polynomials grow very fast. In fact it is known
that for each $n \ge 0$, the sum of the coefficients of $A_n(t)$ is $n!$.

To get around this difficulty we repeat our truncation process.
For an integer $q \ge 1$, let $I = I_q,J=J_q$ be as before.
Then we rewrite \eqref{mat-tf-malf-2} as
follows:
\begin{equation*}
\begin{split}
&f_1(1) {\bf V}_{r-1}^I (g_2,f_3,\ldots,f_r; \ s_1+s_2,s_3,\ldots,s_r)\\
&={\bf A_2}^{II}(f_1;s_1) {\bf V}_r^I(f_1,\ldots,f_r;\ s_1,\ldots,s_r)
+{\bf A_2}^{IJ}(f_1;s_1) {\bf V}_r^J(f_1,\ldots,f_r;\ s_1,\ldots,s_r).
\end{split}
\end{equation*}
Now inverting ${\bf A_2}^{II}(f_1;s_1)$, we get
\begin{equation}\label{vbvz}
\begin{split}
&{\bf V}_r^I(f_1,\ldots,f_r;\ s_1,\ldots,s_r)\\
&=f_1(1) {\bf B_2}^{II}(f_1;s_1)
{\bf V}_{r-1}^I (g_2,f_3,\ldots,f_r; \ s_1+s_2,s_3,\ldots,s_r)
+{\bf Z}^I (f_1,\ldots,f_r;\ s_1,\ldots,s_r),
\end{split}
\end{equation}
where
\begin{equation}\label{zbav}
{\bf Z}^I (f_1,\ldots,f_r;\ s_1,\ldots,s_r)
=  -  {\bf B_2}^{II}(f_1;s_1) {\bf A_2}^{IJ}(f_1;s_1)
{\bf V}_r^J(f_1,\ldots,f_r;\ s_1,\ldots,s_r).
\end{equation}
All the series of meromorphic functions involved in the products of
matrices in formulas \eqref{vbvz} and \eqref{zbav} converge normally
on all compact subsets of $\C^r$. Moreover, all the entries of the
matrices on the right hand side of \eqref{zbav} are holomorphic on
the open set $U_r(q)$. Therefore
the entries of ${\bf Z}^I (f_1,\ldots,f_r;\ s_1,\ldots,s_r)$ are also
holomorphic in $U_r(q)$.

If we write $\pi_q(f_1,\ldots,f_r;\ s_1,\ldots,s_r)$ to be the first entry
of ${\bf Z}^I(f_1,\ldots,f_r;\ s_1,\ldots,s_r)$,
we then get from \eqref{vbvz} that
\begin{equation}\label{explicit-malf-2}
\begin{split}
&L_r(f_1,\ldots,f_r;\ s_1,\ldots,s_r) \\
& =f_1(1) \ L_{r-1}(g_2,f_3,\ldots,f_r;\ s_1+s_2,s_3,\ldots,s_r)\\
& \ \ + f_1(1) \sum_{k=1}^{q-1} (s_1)_k \frac{A_k(f(1))}{(f(1)-1)^k} \
L_{r-1}(g_2,f_3,\ldots,f_r;\ s_1+s_2+k,s_3,\ldots,s_r)\\
& \ \ + \pi_q(f_1,\ldots,f_r;\ s_1,\ldots,s_r),
\end{split}
\end{equation}
where $\pi_q(f_1,\ldots,f_r;\ s_1,\ldots,s_r)$ is holomorphic 
in the open set $U_r(q)$.

\section{Poles and residues}

In this section, using the formulas
\eqref{vbvw-malf}, \eqref{vbvz} and 
the induction on depth $r$,
we obtain a list of possible singularities
of the multiple additive $L$-functions.
From \rmkref{anacont-alf}, we know that
when $r=1$ and $f_1(1)=1$, then
$L_1(f_1;s_1)$ can be extended to a meromorphic function
with only a simple pole at $s_1=1$ with residue $1$,
and if $r=1$ but $f_1(1) \not=1$, then $L_1(f_1;s_1)$
can be extended to an entire function.

First we obtain an expression for the residues along the possible
polar hyperplane of the multiple additive $L$-functions
and then we deduce the exact set of singularities of these functions.
A theorem of G. Frobenius \cite{GF} about the zeros of
Eulerian polynomials plays a crucial role in this analysis.

\subsection{Set of all possible singularities}

The following theorem gives a description of 
possible singularities of the multiple additive $L$-functions.

\begin{thm}\label{poles-malf}
The multiple additive $L$-function $L_r(f_1,\ldots,f_r;\ s_1,\ldots,s_r)$
has different set of singularities depending on 
the values of  $g_i(1)$ for $1 \le i \le r$.

$(a)$ If $g_i(1) \neq 1$ for all $1 \le i \le r$, then
$L_r(f_1,\ldots,f_r;\ s_1,\ldots,s_r)$ is a holomorphic function on $\C^r$.

Now let $i_1 < \cdots < i_m$ be all the indices such that $g_{i_j}(1) = 1$
for all $1 \le j \le m $. Then the set of all possible singularities of
$L_r(f_1,\ldots,f_r;\ s_1,\ldots,s_r)$ is described as follows:

$(b)$ If $i_1=1$, then $L_r(f_1,\ldots,f_r; s_1,\ldots,s_r)$ is
holomorphic outside the union of hyperplanes given by the equations
$$
s_1=1; ~s_1 + \cdots+ s_{i_j}=n \ \text{ for all } n \in \Z_{\le j} \ \text{and }
2 \le j \le m.
$$
It has at most simple poles along each of these hyperplanes.

$(c)$ If $i_1 \not =1$, then $L_r(f_1,\ldots,f_r; s_1,\ldots,s_r)$ is
holomorphic outside the union of hyperplanes given by the equations
$$
s_1 + \cdots+ s_{i_j}=n \text{ for all } n \in \Z_{\le j} \text{  and }
1 \le j \le m.
$$
It has at most simple poles along each of these hyperplanes.
\end{thm}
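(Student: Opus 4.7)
The plan is to proceed by induction on the depth $r$, with the base case $r = 1$ supplied by \rmkref{anacont-alf}: when $f_1(1) \neq 1$ the function $L_1(f_1; s_1)$ is entire (case (a) with $m = 0$), while $f_1(1) = 1$ gives a simple pole only at $s_1 = 1$ (case (b) with $m = 1$, $i_1 = 1$, and the sum condition over $2 \le j \le m$ vacuous); case (c) is empty at $r = 1$ because $i_1 \ge 1$ forces $i_1 = 1$ whenever $m \ge 1$. The inductive step exploits the explicit representations \eqref{explicit-malf-1} and \eqref{explicit-malf-2}, valid for every integer $q \ge 1$, in which the residual terms $\xi_q$ and $\pi_q$ are holomorphic on $U_r(q)$. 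Since $\C^r = \bigcup_q U_r(q)$, the polar locus of $L_r$ is determined by the finitely many explicit main terms, so it suffices to trace how the polar hyperplanes of the depth-$(r-1)$ sub-functions propagate.

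For case (a), every $g_i(1) \neq 1$, so in particular $f_1(1) \neq 1$ and \eqref{explicit-malf-2} applies. The depth-$(r-1)$ sub-function $L_{r-1}(g_2, f_3, \ldots, f_r; \cdot)$ has partial products $h_i := g_2 f_3 \cdots f_{i+1} = g_{i+1}$ for $1 \le i \le r - 1$, all of which satisfy $h_i(1) \neq 1$. By the induction hypothesis, this sub-function and all its shifted siblings on the right-hand side of \eqref{explicit-malf-2} are entire, and hence so is $L_r$.

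For cases (b) and (c), the combinatorial key is to relate the new indices $j_1 < \cdots < j_{m'}$ at which the sub-function's partial products equal $1$ to the original indices $i_1 < \cdots < i_m$. In case (b), with $f_1(1) = 1$ and \eqref{explicit-malf-1}, the sub-function $L_{r-1}(f_2, \ldots, f_r; s_1 + s_2 - 1, s_3, \ldots, s_r)$ has $h_i(1) = g_{i+1}(1)$, yielding $m' = m - 1$ and $j_l = i_{l+1} - 1$. In case (c), with $f_1(1) \neq 1$ and \eqref{explicit-malf-2}, the sub-function $L_{r-1}(g_2, f_3, \ldots, f_r; s_1 + s_2, s_3, \ldots, s_r)$ has $h_i(1) = g_{i+1}(1)$, yielding $m' = m$ and $j_l = i_l - 1$. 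Whether $j_1 = 1$ (so that the induction hypothesis places the sub-function in its own case (b)) or $j_1 > 1$ (its own case (c)) is decided by $i_2 = 2$, respectively $i_1 = 2$. The induction hypothesis then provides the list of polar hyperplanes of the sub-function in its own variables $(u_1, \ldots, u_{r-1}) = (s_1 + s_2 - 1, s_3, \ldots, s_r)$ or $(s_1 + s_2, s_3, \ldots, s_r)$, which translate back to hyperplanes in $(s_1, \ldots, s_r)$; aggregating over the shifted terms $L_{r-1}(\ldots; s_1 + s_2 + k, \ldots)$ appearing in the formulas fills in the arithmetic progression $\Z_{\le j}$ for each relevant $j$. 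The isolated pole $s_1 = 1$ in case (b) is supplied uniquely by the factor $1/(s_1 - 1)$ in \eqref{explicit-malf-1} multiplying the leading sub-function; no analogous factor appears in case (c).

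The principal bookkeeping obstacle is verifying the exact alignment of the arithmetic progressions: the $k = 0$ main term contributes the top hyperplane $s_1 + \cdots + s_{i_j} = j$, after accounting for the $-1$ shift built into \eqref{explicit-malf-1} and its absence in \eqref{explicit-malf-2}, while the remaining shifts $k \ge 1$ supply all strictly smaller integers. One must also check that the polynomial prefactors $(s_1 - 1)_k$, $(s_1)_k$ and the Bernoulli/Eulerian coefficients appearing in the formulas are entire in $s_1$ and therefore cannot introduce new polar hyperplanes, so that only an upper bound on the singular set is being asserted, which is precisely what the statement requires.
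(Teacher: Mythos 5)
Your proposal is correct and follows essentially the same route as the paper: induction on the depth $r$ with base case from Remark~\ref{anacont-alf}, using the truncated translation identities (the paper works with the vector equations \eqref{vbvw-malf} and \eqref{vbvz}, of which your \eqref{explicit-malf-1} and \eqref{explicit-malf-2} are the first rows), the holomorphy of the remainder terms on $U_r(q)$, and the covering $\C^r=\bigcup_q U_r(q)$. Your explicit tracking of how the index set $\{i_1,\ldots,i_m\}$ transforms under passage to the depth-$(r-1)$ sub-function, and of the $\pm 1$ shifts in the first argument, matches the paper's (more tersely stated) invocation of the induction hypothesis.
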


\begin{proof}
We prove the assertions $(a),(b)$ and $(c)$ separately.
 
{\it Proof of $(a)$:} From \rmkref{anacont-alf}, we know
that the assertion $(a)$ is
true for depth $1$ multiple additive $L$-function. 
Now let $q \ge 1$ be an integer and $I=I_q, ~J = J_q$
be as before. We will make use of 
equation \eqref{vbvz} for our proof.

The entries of the first row of the matrix ${\bf B_2}^{II}(f_1;s_1)$
are holomorphic on $\C^r$ and by the induction hypothesis,
the entries of the column matrix
${\bf V}_{r-1}^I (g_2,f_3,\ldots,f_r; \ s_1+s_2,s_3,\ldots,s_r)$
are holomorphic on $\C^r$. Further the entries of the column vector
${\bf Z}^I (f_1,\ldots,f_r;\ s_1,\ldots,s_r)$ are holomorphic in $U_r(q)$.
Since the open sets $U_r(q)$ for $q \ge 1$ cover $\C^r$, assertion
$(a)$ follows.\\

{\it Proof of $(b)$:} For depth $1$ multiple additive $L$-function,
the assertion $(b)$ follows from \rmkref{anacont-alf}.
Again let $q \ge 1$ be an integer and $I=I_q, ~J = J_q$
be as defined earlier. We now complete the proof of
assertion $(b)$ by making use of the equation \eqref{vbvw-malf}.

The entries of the first row of the matrix ${\bf B_1}^{II}(s_1-1)$
are holomorphic outside the hyperplane given by the equation
$s_1=1$ and have at most simple pole along this hyperplane.
By the induction hypothesis, the entries of the column vector
${\bf V}_{r-1}^I(f_2,\ldots,f_r;\ s_1+s_2-1,s_3,\ldots,s_r)$
are holomorphic outside the union of the hyperplanes
given by the equations
$$
s_1 + \cdots+ s_{i_j}=n \ \text{ for all } n \in \Z_{\le j}, \ \text{ for all }
2 \le j \le m
$$
and have at most simple poles
along these hyperplanes. Finally the entries of the column vector
${\bf W}^I(f_1,\ldots,f_r;\ s_1,\ldots,s_r)$ are holomorphic in $U_r(q)$.
Since $\C^r$ is covered by open sets of the form
$U_r(q)$ for $q \ge 1$, assertion $(b)$ follows.\\

{\it Proof of $(c)$:} The proof of this assertion follows along the lines
of the assertion $(a)$. The only difference is the induction hypothesis. 
Here the induction hypothesis implies that the entries of the
column matrix
${\bf V}_{r-1}^I (g_2,f_3,\ldots,f_r; \ s_1+s_2,s_3,\ldots,s_r)$
are holomorphic outside the union of the hyperplanes
given by the equations
$$
s_1 + \cdots+ s_{i_j}=n \ \text{ for all } n \in \Z_{\le j}, \ \text{ for all }
1 \le j \le m.
$$
Now the proof of assertion $(c)$ follows 
mutatis mutandis the proof of assertion $(a)$. 
\end{proof}

\subsection{Expression for residues}

Here we compute the residues of the multiple additive $L$-function
of depth $r$ along the possible polar hyperplanes.

\begin{thm}\label{residues-malf}
Let $i_1 < \cdots < i_m$ be the indices such that $g_{i_j}(1) = 1$
for all $1 \le j \le m $. If $i_1=1$, then $L_r(f_1,\ldots,f_r;\ s_1,\ldots,s_r)$
has a polar singularity along the hyperplane given by the equation $s_1=1$
and the residue is the restriction of $L_{r-1}(f_2,\ldots,f_r;\ s_2,\ldots,s_r)$
to this hyperplane. 

In general, the residue along the hyperplane 
given by the equation
$$
s_1 + \cdots+ s_{i_j}=n \ \text{ for } n \in \Z_{\le j} \ \text{ and }
1 \le j \le m
$$
is the restriction 
of the product of $L_{r-i_j}(f_{i_j +1},\ldots,f_r;\ s_{i_j +1},\ldots,s_r)$ with
$(0,j-n)$-th entry of 
\begin{equation*}
\begin{split}
{\bf C}_j := &\left( \prod_{i=1}^{i_1-1} g_i(1) {\bf B_2}(g_i;s_1+\cdots+s_i)  \right)\\
& \times \prod_{k=1}^{j-1} \left( {\bf B_1}(s_1+\cdots+s_{i_k}-k)
\prod_{i=i_k+1}^{i_{k+1}-1}  g_i(1) {\bf B_2}(g_i;s_1+\cdots+s_i - k) \right)
\end{split}
\end{equation*}
 to the above hyperplane.
\end{thm}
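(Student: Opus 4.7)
The strategy is to iterate the matrix formulations \eqref{vbvw-malf} and \eqref{vbvz} exactly $i_j$ times, extracting the pole along $s_1+\cdots+s_{i_j}=n$ from the final $\mathbf{B_1}$-factor. Fix this hyperplane and choose a truncation parameter $q > j-n$. A direct check shows that, regardless of which type of reduction was applied at earlier steps, the tuple obtained after $i-1$ reductions has first partial product whose value at $1$ equals $g_i(1)$ of the original tuple. Consequently at step $i$ (for $1\le i\le i_j$) I apply \eqref{vbvw-malf} when $g_i(1)=1$ (i.e.\ $i\in\{i_1,\ldots,i_m\}$) and \eqref{vbvz} otherwise. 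Writing $\tau_i:=s_1+\cdots+s_i-|\{k:i_k<i\}|$ for the first argument entering step $i$, this introduces the matrix factor $\mathbf{B_1}^{II}(\tau_i-1)$ or $g_i(1)\mathbf{B_2}^{II}(g_i;\tau_i)$, respectively, plus a holomorphic correction.

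After all $i_j$ iterations we obtain
\[
\mathbf{V}_r^I(f_1,\ldots,f_r;\ s_1,\ldots,s_r) = \mathbf{C}_j^{II}\cdot\mathbf{B_1}^{II}(s_1+\cdots+s_{i_j}-j)\cdot\mathbf{V}_{r-i_j}^I(f_{i_j+1},\ldots,f_r;\ \sigma,s_{i_j+2},\ldots,s_r)+\mathbf{H}^I,
\]
where $\sigma:=s_1+\cdots+s_{i_j+1}-j$ and the cumulative correction $\mathbf{H}^I$ is holomorphic in a neighborhood of our hyperplane (the matrix factors introduced in the first $i_j-1$ iterations have poles only on hyperplanes involving $s_1+\cdots+s_{i_k}$ with $k<j$, distinct from our hyperplane). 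Direct bookkeeping, using $\tau_{i_k}-1=s_1+\cdots+s_{i_k}-k$ and $\tau_i=s_1+\cdots+s_i-k$ whenever $i_k<i<i_{k+1}$, identifies the product of the first $i_j-1$ matrix factors with $\mathbf{C}_j$ as given in the theorem.

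Take the first entry of the above identity. Since only the diagonal entries $[\mathbf{B_1}(t)]_{k,k}=1/(t+k)$ of the trailing $\mathbf{B_1}$-factor are singular --- with simple pole of residue $1$ at $t=-k$ --- setting $t=s_1+\cdots+s_{i_j}-j$ and $k=j-n$ isolates the pole at $s_1+\cdots+s_{i_j}=n$. Expanding the matrix product and taking the residue leaves a single surviving term,
\[
\mathrm{Res}_{s_1+\cdots+s_{i_j}=n} L_r = [\mathbf{C}_j]_{0,j-n}\cdot L_{r-i_j}(f_{i_j+1},\ldots,f_r;\ \sigma+j-n,s_{i_j+2},\ldots,s_r)\bigg|_{s_1+\cdots+s_{i_j}=n}.
\]
On the hyperplane, $\sigma+(j-n)=(s_1+\cdots+s_{i_j+1}-j)+(j-n)=s_{i_j+1}$, which recovers precisely the formula claimed.

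The main obstacle is the careful bookkeeping required by the iteration: tracking $\tau_i$ and the matrix arguments through all $i_j$ steps, verifying the claimed identification of the partial product with $\mathbf{C}_j$, and confirming that the accumulated correction $\mathbf{H}^I$ is holomorphic in a neighborhood of the hyperplane. The last point amounts to choosing the truncation parameter $q$ large enough at each step so that the corresponding $\mathbf{W}^I$ (or $\mathbf{Z}^I$) is holomorphic on a set that contains a neighborhood of the relevant portion of the hyperplane, which is permissible since the truncation only affects the decomposition and not the meromorphic function $L_r$ itself.
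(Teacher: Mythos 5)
Your proof is correct and follows essentially the same route as the paper: iterate \eqref{vbvw-malf} and \eqref{vbvz} according to whether $g_i(1)=1$, identify the accumulated matrix factors with ${\bf C}_j$, and observe that the truncation remainders contribute no pole along the hyperplane. The only (cosmetic) difference is that you carry out the $i_j$-th reduction explicitly and read the pole off the diagonal entries of the trailing ${\bf B_1}$ factor, whereas the paper stops at depth $r-i_j+1$ and invokes the first part of the theorem for the function $L_{r-i_j+1}(g_{i_j},f_{i_j+1},\ldots,f_r;\,\cdot\,)$; the two finishes are equivalent.
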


\begin{proof}
First suppose that $i_1=1$. Then for any integer $q \ge 1$, we deduce from
\eqref{explicit-malf-1} and \thmref{poles-malf} that $$
L_r(f_1,\ldots,f_r;\ s_1,\ldots,s_r) -
\frac{1}{s_1-1} \ L_{r-1}(f_2,\ldots,f_r;\ s_1+s_2-1,s_3,\ldots,s_r)
$$
has no pole inside the open set $U_r(q)$
along the hyperplane given by the equation $s_1 =~1$. 
These open sets cover $\C^r$ and hence the residue of 
$L_r(f_1,\ldots,f_r;\ s_1,\ldots,s_r)$
along the hyperplane given by the equation $s_1=1$ is the restriction
of the meromorphic function 
$L_{r-1}(f_2,\ldots,f_r;\ s_2,\ldots,s_r)$ to
the hyperplane given by the equation $s_1=1$.
This proves the first part of \thmref{residues-malf}.

Next let $q \ge j-n+1$ be an integer and $I=I_q, ~J = J_q$
be as before. Now to determine the residue along the hyperplane
$$
s_1 + \cdots+ s_{i_j}=n \ \text{ for } n \in \Z_{\le j} \ \text{ and }
1 \le j \le m,
$$
we iterate the formulas \eqref{vbvw-malf} and \eqref{vbvz}
according to the applicable case and obtain that
\begin{equation}\label{vcvz}
\begin{split}
&{\bf V}_r^I(f_1,\ldots,f_r;\ s_1,\ldots,s_r)\\
&={\bf C}_j ^{II}
{\bf V}_{r-i_j+1}^I (g_{i_j},f_{i_j +1},\ldots,f_r; \
s_1+\cdots+s_{i_j}-(j-1),s_{i_j +1},\ldots,s_r)\\
& \ \ \ +{\bf Z}^{j,I} (f_1,\ldots,f_r;\ s_1,\ldots,s_r).
\end{split}
\end{equation}
Here ${\bf Z}^{j,I} (f_1,\ldots,f_r;\ s_1,\ldots,s_r)$ is a column
matrix whose entries are finite
sums of products of rational functions in $s_1,\ldots,s_{i_j-1}$ with meromorphic
functions which are holomorphic in $U_r(q)$. 
These entries therefore have no
singularity along the hyperplane given by the equation
$s_1 + \cdots+ s_{i_j}=n$ in $U_r(q)$.
The entries of ${\bf C}_j ^{II}$ 
are rational functions in $s_1,\ldots,s_{i_j-1}$ and hence
again have no singularity along the above hyperplane.
It now follows from the first part of \thmref{residues-malf} that the only entry 
of
$$
{\bf V}_{r-i_j+1}^I (g_{i_j},f_{i_j +1},\ldots,f_r; \
s_1+\cdots+s_{i_j}-(j-1),s_{i_j +1},\ldots,s_r)
$$
that can possibly have a pole in $U_r(q)$
along the hyperplane given by the equation
$s_1 + \cdots+ s_{i_j}=n$  is
the one of index $j-n$, which is
$$
L_{r-i_j+1} (g_{i_j},f_{i_j +1},\ldots,f_r; \
s_1+\cdots+s_{i_j}-n+1,s_{i_j +1},\ldots,s_r).
$$
Again by the first part of \thmref{residues-malf}, the residue
of this function along the hyperplane given by the equation
$s_1 + \cdots+ s_{i_j}=n$ is the restriction of
$L_{r-i_j} (f_{i_j +1},\ldots,f_r;\ s_{i_j +1},\ldots,s_r)$ to
this hyperplane. Thus the residue of $L_r(f_1,\ldots,f_r;\ s_1,\ldots,s_r)$
along the hyperplane given by the equation $s_1 + \cdots+ s_{i_j}=n$
is the product of $L_{r-i_j}(f_{i_j +1},\ldots,f_r;\ s_{i_j +1},\ldots,s_r)$ with
$(0,j-n)$-th entry of ${\bf C}_j ^{II}$. Since open sets of the form
$U_r(q)$ cover $\C^r$, this completes the proof of \thmref{residues-malf}.
\end{proof}

\subsection{Exact set of singularities}

Now we will deduce the exact set of singularities of the multiple
additive $L$-functions. As mentioned earlier, a theorem of
Frobenius \cite{GF} about the zeros of Eulerian polynomials
helps us in determining
the exact set of singularities. We recall that the
Eulerian polynomials $A_n(t)$'s are defined by the following exponential
generating function
$$
\sum_{n \ge 0} A_n(t) \frac{y^n}{n!}
= \frac{1-t}{e^{(t-1)y}-t}.
$$
These polynomials satisfy the following recurrence relation
$$
A_{0}(t) = 1 \ \text{ and } \ A_{n}(t) = t(1-t)A_{n-1}'(t) + A_{n-1}(t)(1+(n-1)t)
\ \text{ for all } \ n \ge 1. 
$$
Frobenius \cite{GF} proved the following theorem 
about the zeros of the Eulerian polynomials~$A_n(t)$.

\begin{thm}[Frobenius]\label{Frob}
All the zeros of the Eulerian polynomials $A_n(t)$ are real, negative and simple.
\end{thm}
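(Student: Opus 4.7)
The plan is to prove the theorem by induction on $n$, exploiting the recurrence
$$A_0(t)=1, \qquad A_n(t)=t(1-t)A_{n-1}'(t)+A_{n-1}(t)\bigl(1+(n-1)t\bigr)$$
recorded just above the statement. First I would collect three elementary preliminaries. By comparing leading terms in the recurrence one checks that for $n\geq 1$ the polynomial $A_n(t)$ has degree $n-1$ with leading coefficient $1$. Setting $t=0$ in the exponential generating function gives $\sum_n A_n(0)\,y^n/n!=e^y$, so $A_n(0)=1$ for all $n$. Finally, an easy induction on the recurrence shows that the Eulerian numbers (the coefficients of $A_n$) are nonnegative, so any real zero of $A_n$ is automatically $\leq 0$, and by the previous sentence it is in fact strictly negative. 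The base case $n=2$ is immediate since $A_2(t)=1+t$.

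For the inductive step, let $n\geq 3$ and assume $A_{n-1}$ has $n-2$ simple negative zeros $\alpha_1<\alpha_2<\cdots<\alpha_{n-2}<0$. Substituting $t=\alpha_i$ into the recurrence gives
$$A_n(\alpha_i)=\alpha_i(1-\alpha_i)\,A_{n-1}'(\alpha_i).$$
The prefactor $\alpha_i(1-\alpha_i)$ is negative, while at a string of simple zeros the derivative $A_{n-1}'(\alpha_i)$ must alternate in sign; hence the values $A_n(\alpha_1),\ldots,A_n(\alpha_{n-2})$ alternate in sign as well. The intermediate value theorem then produces $n-3$ zeros of $A_n$, exactly one in each open interval $(\alpha_i,\alpha_{i+1})$.

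Two more zeros are found by examining the outermost intervals. Since $A_{n-1}(0)=1>0$ and $\alpha_{n-2}$ is the largest zero, $A_{n-1}$ is positive on $(\alpha_{n-2},0)$, so $A_{n-1}'(\alpha_{n-2})>0$; combined with $\alpha_{n-2}(1-\alpha_{n-2})<0$ this gives $A_n(\alpha_{n-2})<0$, whereas $A_n(0)=1>0$, which yields a zero in $(\alpha_{n-2},0)$. For the left end, the alternation of signs of $A_{n-1}$ on its $n-1$ sign intervals forces the sign of $A_{n-1}$ on $(-\infty,\alpha_1)$ to be $(-1)^{n-2}$, so $A_{n-1}'(\alpha_1)$ has sign $(-1)^{n-1}$; hence $A_n(\alpha_1)$ has sign $(-1)^n$. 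But $A_n$ has degree $n-1$ with positive leading coefficient, so $A_n(t)$ has sign $(-1)^{n-1}$ as $t\to-\infty$; this opposite sign delivers a zero in $(-\infty,\alpha_1)$. Totalling $(n-3)+1+1=n-1=\deg A_n$ real negative zeros exhausts the degree, forcing all zeros to be real, negative, and simple.

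The main obstacle is the parity-sensitive sign bookkeeping in the third paragraph: one must track the alternation of $A_{n-1}$ all the way from $\alpha_{n-2}$ to $-\infty$ in order to pin down the sign of $A_{n-1}'(\alpha_1)$, and then compare it against the parity-dependent sign of $A_n$ at $-\infty$. Once the degree, constant term, positivity of the Eulerian coefficients, and these two endpoint signs are in place, the counting argument is forced, and the inductive step closes.
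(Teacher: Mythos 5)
Your proof is correct. The paper itself gives no proof of this statement --- it is quoted from Frobenius's 1910 paper \cite{GF} and used as a black box --- so there is nothing to compare against; what you have written is the classical interlacing argument, and it is complete. All the ingredients check out: the recurrence gives $\deg A_n = n-1$ with leading coefficient $1$, the generating function at $t=0$ gives $A_n(0)=1$, and at a zero $\alpha_i$ of $A_{n-1}$ the recurrence collapses to $A_n(\alpha_i)=\alpha_i(1-\alpha_i)A_{n-1}'(\alpha_i)$ with $\alpha_i(1-\alpha_i)<0$, so the sign bookkeeping (derivative of sign $(-1)^{n-2-i}$ at $\alpha_i$, hence $A_n(\alpha_i)$ of sign $(-1)^{n-1-i}$; $A_n(\alpha_{n-2})<0$ against $A_n(0)=1$; $A_n(\alpha_1)$ of sign $(-1)^n$ against the sign $(-1)^{n-1}$ of $A_n$ at $-\infty$) produces $n-1$ zeros in $n-1$ disjoint negative intervals, exhausting the degree. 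One small remark: the preliminary about nonnegativity of the Eulerian numbers is harmless but redundant, since the counting argument already locates every zero in $(-\infty,0)$.
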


From this theorem and the above recurrence formula, we can now deduce
the following corollary.

\begin{cor}\label{common-zero}
For any $n \ge 0$, $A_{n+1}(t)$ and $A_n(t)$ do not have a common zero.
\end{cor}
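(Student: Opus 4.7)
The plan is to argue by contradiction: suppose some $t_0$ is a common zero of $A_{n+1}(t)$ and $A_n(t)$, and derive a contradiction with the simplicity assertion of \thmref{Frob}.

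The key tool is the recurrence relation stated just before the corollary, applied with index $n+1$ in place of $n$:
\begin{equation*}
A_{n+1}(t) = t(1-t) A_n'(t) + A_n(t)\bigl(1 + n t\bigr).
\end{equation*}
Evaluating at the supposed common zero $t_0$, the second summand vanishes because $A_n(t_0)=0$, and the left side also vanishes by hypothesis. This leaves the identity $t_0(1-t_0)\, A_n'(t_0) = 0$.

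By \thmref{Frob}, every zero of $A_n(t)$ is real and negative, so $t_0 < 0$; in particular $t_0 \neq 0$ and $t_0 \neq 1$. Hence $t_0(1-t_0)\neq 0$, and we are forced to conclude $A_n'(t_0)=0$. Combined with $A_n(t_0)=0$, this means $t_0$ is a multiple zero of $A_n(t)$, which directly contradicts the simplicity statement in \thmref{Frob}.

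There is no real obstacle here: the argument is a one-line deduction from the recurrence once Frobenius's theorem is known. The only thing to be careful about is ruling out the degenerate factor $t_0(1-t_0)$, which is precisely what the sign information (real and negative zeros) in \thmref{Frob} provides.
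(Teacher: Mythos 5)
Your proof is correct and is essentially the paper's own argument, merely phrased as a contradiction rather than directly: the paper takes a zero $a$ of $A_n$, notes $A_n'(a)\neq 0$ and $a\neq 0,1$ by \thmref{Frob}, and reads off $A_{n+1}(a)\neq 0$ from the same recurrence. Both versions use exactly the same ingredients (the recurrence shifted by one index, plus the reality, negativity, and simplicity of the zeros), so there is nothing further to add.
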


\begin{proof}
To see this, let $a$ be a zero of $A_n(t)$. Then by \thmref{Frob},
$A_n'(a) \neq 0$. Again by \thmref{Frob}, $a \neq 0,1$. Using
the above recurrence formula, we deduce that $A_{n+1}(a) \neq~0$.
\end{proof}

Recall that for an indeterminate $t$ and a non-trivial 
additive character $f:\Z \to \C^*$, we have

$$
{\bf B_1}(t) = \left( \begin{array}{c c c c c}
\frac{1}{t} & \frac{B_1}{1!} &  \frac{(t+1)B_2}{2!} &
\frac{(t+1)(t+2)B_3}{3!} & \cdots \\
0 & \frac{1}{t+1} & \frac{B_1}{1!} & \frac{(t+2)B_2}{2!} & \cdots \\
0 & 0 & \frac{1}{t+2} & \frac{B_1}{1!} & \cdots \\
0 & 0 & 0 & \frac{1}{t+3} & \cdots\\
\vdots & \vdots & \vdots & \vdots & \ddots
\end{array} \right)
$$
and
$$
{\bf B_2}(f;t) = \frac{1}{1-f(1)}
 \left( \begin{array}{c c c c c}
1 & \frac{1}{f(1)-1}t
& \frac{A_2(f(1))}{(f(1)-1)^2} \frac{t(t+1)}{2!}
& \frac{A_3(f(1))}{(f(1)-1)^3}\frac{t(t+1)(t+2)}{3!}
& \cdots \\
0 & 1 & \frac{1}{f(1)-1} (t+1)
& \frac{A_2(f(1))}{(f(1)-1)^2} \frac{(t+1)(t+2)}{2!} & \cdots \\
0 & 0 & 1 & \frac{1}{f(1)-1} (t+2) & \cdots \\
0 & 0 & 0 & 1 & \cdots\\
\vdots & \vdots & \vdots & \vdots & \ddots
\end{array} \right).
$$
Clearly non-zero elements of each row and each column
of these matrices are linearly independent as elements of
$\C(t)$. Also the first two entries of the first row of these 
matrices are non-zero.  Since we know that the
Bernoulli numbers 
\begin{equation}\label{Ber}
B_n~=~0  ~\iff~  n \text{  is odd and  } n \geq 3,
\end{equation}
we get that at least one of the first
two entries in every column of ${\bf B_1}(t)$ is non-zero. On the other
hand,  \corref{common-zero} implies that at least one of the first
two entries in every column of ${\bf B_2}(f;t)$ is non-zero.
Hence we know that for any two indeterminates $t_1,t_2$ and
any two non-trivial group homomorphisms $f_1,f_2:\Z \to \C^*$,
all entries of the first row of the matrices
${\bf B_1}(t_1) \ {\bf B_1}(t_2)$,
${\bf B_1}(t_1) \ {\bf B_2}(f_2;t_2)$, ${\bf B_2}(f_1;t_1)\ {\bf B_1}(t_2)$
and ${\bf B_2}(f_1;t_1)\ {\bf B_2}(f_2;t_2)$
are non-zero.

With this observation in place, we are now ready to 
determine the exact set of
singularities of the multiple additive $L$-functions.

\begin{thm}\label{exactpoles-malf}
The exact set of polar singularities of the multiple additive 
$L$-function\linebreak
$L_r(f_1, \ldots, f_r; \ s_1, \ldots, s_r)$ of depth $r$
differs from the set of all possible singularities (as listed
in \thmref{poles-malf}) only in the 
following two cases. Here we 
keep the notations of \thmref{poles-malf}.

a) If $i_1=1$ and $i_2=2$ i.e. $f_1(1)= f_2(1) =1$,
then $L_r(f_1,\ldots,f_r; s_1,\ldots,s_r)$
is holomorphic outside the union of hyperplanes given by the equations
\begin{align*}
& s_1=1; ~s_1+s_2=n \text{ for all } n \in \Z_{\le 2} \setminus J; \\
& s_1 + \cdots+ s_{i_j}=n \text{ for all } n \in \Z_{\le j},~ 
3 \le j \le m,
\end{align*}
where $J:=\{-2n-1:n\in \N\}$.
It has simple poles along each of these hyperplanes.

b) If $i_1=2$, then $L_r(f_1,\ldots,f_r; s_1,\ldots,s_r)$
is holomorphic outside the union of hyperplanes given by the equations
\begin{align*}
& s_1+s_2=n \text{ for all } n \in \Z_{\le 1} \setminus J; \\
& s_1 + \cdots+ s_{i_j}=n \text{ for all } n \in \Z_{\le j},~
2 \le j \le m,
\end{align*}
where $J:=\{1- n : n  \in I \}$ and $I:=\{ n  \in \N : A_{n}(f_1(1))=0\}$.
It has simple poles along each of these hyperplanes.
\end{thm}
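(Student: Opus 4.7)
The strategy is to combine the residue formula of \thmref{residues-malf} with the explicit shape of the matrices $\mathbf{B}_1(t)$ and $\mathbf{B}_2(f;t)$ displayed above. \thmref{residues-malf} gives the residue of $L_r$ along $s_1+\cdots+s_{i_j}=n$ as the restriction of $L_{r-i_j}(f_{i_j+1},\ldots,f_r;\ s_{i_j+1},\ldots,s_r)\cdot[\mathbf{C}_j]_{0,j-n}$ to that hyperplane; since the $L$-factor is a nonzero meromorphic function, the hyperplane is an actual simple pole of $L_r$ precisely when the first-row entry $[\mathbf{C}_j]_{0,j-n}$ is nonzero as a rational function. The problem thus reduces to a non-vanishing analysis for the top-row entries of $\mathbf{C}_j$.

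Reading off its definition, $\mathbf{C}_j$ degenerates to a \emph{single} $\mathbf{B}$-matrix in exactly two cases, which match the two exceptional situations in the statement: when $j=2$ and $i_1=1,\ i_2=2$ one has $\mathbf{C}_2=\mathbf{B}_1(s_1-1)$; when $j=1$ and $i_1=2$ one has $\mathbf{C}_1=f_1(1)\,\mathbf{B}_2(f_1;s_1)$. In the first scenario the first row of $\mathbf{B}_1(t)$ together with \eqref{Ber} shows that $[\mathbf{B}_1(s_1-1)]_{0,2-n}$ vanishes iff $B_{2-n}=0$, that is, iff $2-n$ is odd and at least $3$; this is precisely $n\in J=\{-2k-1:k\in\N\}$, yielding case (a). In the second, $[f_1(1)\mathbf{B}_2(f_1;s_1)]_{0,1-n}$ vanishes iff $A_{1-n}(f_1(1))=0$, which gives $n\in J=\{1-k:k\in I\}$ with $I=\{k\in\N:A_k(f_1(1))=0\}$, yielding case (b). For every $n$ outside the respective $J$ the entry is a nonzero polynomial in $s_1$, and therefore the residue is nonzero on the hyperplane.

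In every remaining possible polar hyperplane from \thmref{poles-malf}, $\mathbf{C}_j$ is a product of at least two matrices drawn from $\{\mathbf{B}_1(\cdot),\,\mathbf{B}_2(g;\cdot)\}$ with mutually independent arguments. For these I would invoke the fact, assembled in the paragraph just above \thmref{exactpoles-malf}, that every first-row entry of any two-fold product $\mathbf{B}_\bullet(t_1)\mathbf{B}_\bullet(t_2)$ is nonzero in the appropriate rational function field. The ingredients are: the $\C$-linear independence of the nonzero entries in any row of $\mathbf{B}_1$ or $\mathbf{B}_2$ (immediate from the explicit formulas, since these entries are pairwise distinct monomials in $t$ up to scalars and a $1/t$-factor), together with the property that each column of $\mathbf{B}_1$ (by \eqref{Ber}) and of $\mathbf{B}_2$ (by \corref{common-zero}) has at least one of its top two entries nonzero. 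Expanding the two-fold product and exploiting linear independence of the $t_1$-part over $\C(t_2)$ shows that a hypothetical zero in the first row forces both top entries of some column of the second factor to vanish, a contradiction.

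The main obstacle is to propagate this two-factor non-vanishing argument to the long products that make up a general $\mathbf{C}_j$. One has to check inductively that after multiplying by an additional $\mathbf{B}$-factor in a fresh indeterminate (possibly attached to a different character $g_i$), both the linear independence of first-row entries and the ``top-two-nonzero per column'' property are preserved; once this is established, the earlier case analysis shows that every non-exceptional hyperplane in \thmref{poles-malf} remains a simple pole, giving the two exact descriptions in (a) and (b).
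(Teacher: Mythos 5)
Your proposal is correct and follows essentially the same route as the paper: it reduces the question to the non-vanishing of the top-row entries of ${\bf C}_j$ via \thmref{residues-malf}, identifies the two degenerate single-matrix cases ${\bf C}_2={\bf B_1}(s_1-1)$ and ${\bf C}_1=f_1(1)\,{\bf B_2}(f_1;s_1)$ and resolves them via \eqref{Ber} and \corref{common-zero}, and handles all remaining hyperplanes by the two-factor non-vanishing observation preceding the theorem. The inductive propagation to products of three or more ${\bf B}$-factors that you flag as the main obstacle is likewise left implicit in the paper, which simply asserts non-vanishing once at least two factors in distinct indeterminates are present.
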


\begin{proof}
When $1 \le j \le m$ and $n \in \Z_{\le j}$, the restriction of
$L_{r-i_j} (f_{i_j +1},\ldots,f_r;\ s_{i_j +1},\ldots,s_r)$
to the hyperplane given by the equation
$s_1 + \cdots+ s_{i_j}=n$
is a non-zero meromorphic function.

First suppose that $i_1=1$ and $i_2=2$. Then by \eqref{Ber},
we deduce that only non-zero entries in 
the first row of ${\bf C}_2$ are  of index $(0,1)$ and of index $(0, 2n)$ 
for $n \in \N$. Also we know that all the entries in the first
row of ${\bf C}_j$ for $3 \le j \le m$ are non-zero. Now
from \thmref{residues-malf}, we  conclude
that the exact set of singularities in this case consists of the hyperplanes
given by the equations
\begin{align*}
& s_1=1; ~s_1+s_2=n \text{ for all } n \in \Z_{\le 2} \setminus J; \\
& s_1 + \cdots+ s_{i_j}=n \text{ for all } n \in \Z_{\le j}, \text{ for all }
3 \le j \le m,
\end{align*}
where $J:=\{-2n-1:n\in \N\}$. This completes the proof of assertion $(a)$.

Next let $i_1=2$. Clearly the entries in the first row of ${\bf C}_1$ 
that are zero are of index $(0, n)$ for all $n \in I$. Also we know that
all entries in the first row of ${\bf C}_j$ for $2 \le j \le m$ are non-zero. 
Hence using \thmref{residues-malf}, we have that 
the exact set of singularities in this case consists of the hyperplanes
given by the equations
\begin{align*}
& s_1+s_2=n \text{ for all } n \in \Z_{\le 1} \setminus J; \\
& s_1 + \cdots+ s_{i_j}=n \text{ for all } n \in \Z_{\le j}, \text{ for all }
2 \le j \le m,
\end{align*}
where $J:=\{1- n :  n \in I \}$.

Now in all other cases, applying \thmref{poles-malf}, we see that
the hyperplanes
given by the equations of the form $s_1+s_2=n$ are not singularities
of $L_r(f_1, \ldots, f_r; \ s_1, \ldots, s_r)$. 
Using \thmref{residues-malf}, we know that the expression
for residues along
the possible polar hyperplanes given by the equations of the form
$s_1+ \cdots+ s_{i_j}=n$ involves product of at least two matrices 
of the form ${\bf B_1}(x)$ and ${\bf B_2}(f;y)$ where $x$ and $y$ 
are two different indeterminates. Hence such an expression is non-zero.
This completes the proof of \thmref{exactpoles-malf}.
\end{proof}

\begin{ex}
\rm We know that $t=-1$ is a zero for the Eulerian polynomials
$A_{n}(t)$ only when $n$ is even. Suppose that we are in a case
when $f_1(1)= -1$ and $i_1=2$ i.e. $f_1(1)= f_2(1) = -1$.
Now \thmref{exactpoles-malf} implies that the 
exact set of singularities
of $L_r(f_1, \ldots, f_r; \ s_1, \ldots, s_r)$ 
consists of the hyperplanes given by the equation
\begin{align*}
& s_1+s_2=n \text{ for all } n \in \Z_{\le 1} \setminus J; \\
& s_1 + \cdots+ s_{i_j}=n \text{ for all } n \in \Z_{\le j},~
2 \le j \le m,
\end{align*}
where $J:=\{-2n-1:n\in \N\}$. Further 
$L_r(f_1, \ldots, f_r; \ s_1, \ldots, s_r)$
has simple poles along these hyperplanes.
\end{ex}

\section{Concluding remarks}

Though at this moment it seems difficult to determine the
exact set of singularities of the multiple Dirichlet $L$-functions,
we can still extend the theorem of Akiyama and Ishikawa
for Dirichlet characters, not necessarily of same modulus.
As an immediate consequence of \eqref{L-Psi-multiple}, \thmref{anacont-malf}
and \thmref{poles-malf}, we derive the following theorem.

\begin{thm}\label{poles-mdlf}
Let $r \ge 1$ be an integer and $\chi_1,\ldots,\chi_r$ be Dirichlet characters
of arbitrary modulus. Then the multiple Dirichlet $L$-function
$L_r(s_1,\ldots,s_r;~\chi_1,\ldots,\chi_r)$
of depth $r$ can be extended as a
meromorphic function to $\C^r$ with possible simple poles at
the hyperplanes given by the equations
$$
s_1=1 ; ~s_1+\cdots+s_i =n  \ \text{ for all } \ n \in \Z_{\le i}, ~ 2 \le i \le r.
$$
\end{thm}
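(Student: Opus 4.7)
The plan is to reduce everything to the results already established for multiple additive $L$-functions via the identity \eqref{L-Psi-multiple}. First I would fix, for each $1 \le i \le r$, natural numbers $N_i$ equal to (any common multiple of) the modulus of $\chi_i$ and, for each tuple $(b_1,\ldots,b_r)$ with $1 \le b_i \le N_i$, the additive characters $f_i \colon \Z \to \C^*$ given by $f_i(n) := e^{2\pi\iota b_i n/N_i}$. With this choice the series $\Psi_r(s_1,\ldots,s_r;b_1,\ldots,b_r)$ is precisely the multiple additive $L$-function $L_r(f_1,\ldots,f_r;s_1,\ldots,s_r)$, and \eqref{L-Psi-multiple} exhibits $L_r(s_1,\ldots,s_r;\chi_1,\ldots,\chi_r)$ as a finite $\C$-linear combination of such $\Psi_r$'s.

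Next I would check the hypothesis of \thmref{anacont-malf}: since $|f_i(1)| = 1$ for each $i$, the partial products $g_i = \prod_{j \le i} f_j$ satisfy $|g_i(1)| = 1 \le 1$. Thus \thmref{anacont-malf} applies to every summand $\Psi_r$, giving a meromorphic continuation of each of them, and hence of the finite sum $L_r(s_1,\ldots,s_r;\chi_1,\ldots,\chi_r)$, to the whole of $\C^r$.

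For the localization of possible poles I would invoke \thmref{poles-malf} applied to each summand separately. For a given $(b_1,\ldots,b_r)$ the set of candidate polar hyperplanes depends on the pattern of indices $i_1 < \cdots < i_m$ for which $g_{i_j}(1) = 1$; in all three cases $(a),(b),(c)$ the resulting list of hyperplanes is a subset of
\[
\{s_1 = 1\} \cup \bigcup_{2 \le i \le r} \bigcup_{n \in \Z_{\le i}} \{s_1 + \cdots + s_i = n\},
\]
with at most simple poles along each. Taking the union over the finitely many summands in \eqref{L-Psi-multiple} preserves this containment, and any cancellation only reduces the pole set, so the conclusion of \thmref{poles-mdlf} follows.

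There is really no serious obstacle here; the theorem is a bookkeeping consequence of the additive-character machinery already developed. The one point requiring a moment's thought is that the worst-case pole set in \thmref{poles-malf}, corresponding to the chain $i_j = j$ for all $j$, matches exactly the list claimed in \thmref{poles-mdlf}; this verification is immediate by inspection. Consequently the proof amounts to: expand via \eqref{L-Psi-multiple}, continue each summand via \thmref{anacont-malf}, and localize poles via \thmref{poles-malf}. \qed
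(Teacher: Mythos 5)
Your proposal is correct and is essentially the paper's own argument: the paper derives \thmref{poles-mdlf} as an immediate consequence of \eqref{L-Psi-multiple}, \thmref{anacont-malf} and \thmref{poles-malf}, exactly as you do by expanding into the $\Psi_r$'s, noting $|g_i(1)|=1$, and observing that since $j\le i_j$ every candidate hyperplane $s_1+\cdots+s_{i_j}=n$, $n\in\Z_{\le j}$, lies in the stated list. The only cosmetic omission is the depth-one base case $r=1$ (covered by \rmkref{anacont-alf} rather than \thmref{anacont-malf}, which assumes $r\ge 2$).
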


It has been discussed in detail in \cite{BS-th} that how one could
deduce translation formulas and thereafter obtain meromorphic continuation
and information about singularities
for various Dirichlet series and their several variable counterparts.
In view of this, we conclude our article by the following question.
An answer to this will perhaps help us to determine the
exact set of singularities of the multiple Dirichlet $L$-functions.

\begin{ques}
Is it possible to derive a translation formula satisfied by the
multiple Dirichlet $L$-functions of depth $r$, which involves
its translates with respect to the first variable $s_1$ and also
involves the multiple Dirichlet $L$-functions of depth $r-1$?
\end{ques}

{\bf Acknowledgement:}
I would like to thank Prof. Joseph Oesterl\'e for his helpful
suggestions and comments.

\end{document}